\newcommand{\N}{\mathbb{N}}
\newcommand{\R}{\mathbb{R}}
\def\Var{{{\rm Var}}}
\newcommand{\E}{\mathbb{E}}
\def \({\left(}
\def \){\right)}
\def \[{\left[}
\def \]{\right]}
\newcommand{\eps}{\varepsilon}
\newcommand{\la}{\langle}
\newcommand{\ra}{\rangle}
\newcommand{\be}{\begin{equation}}
\newcommand{\ee}{\end{equation}}
\newcommand{\beqa}{\begin{eqnarray}}
\newcommand{\eeqa}{\end{eqnarray}}
\newcommand{\bea}{\begin{align}}
\newcommand{\eea}{\end{align}}
\DeclareMathAlphabet{\varmathbb}{U}{bbold}{m}{n}
\newcommand{\V}{\mathcal{V}}
\newcommand{\hamil}{\mathcal{H}_N}
\newcommand{\partf}{\mathcal{Z}_N}
\newcommand{\interv}{[-1,1]}
\newcommand{\norm}[1]{\left\| #1 \right\|}
\newcommand{\eqdist}{\overset{\rm d}{=}}
\newcommand{\convdist}{\xrightarrow{\rm d}}
\newcommand{\limN}{\xrightarrow{N\to+\infty}}
\newcommand{\bigo}[1]{\mathcal{O}(#1)}
\newcommand{\dyadic}{\mathcal{I}}
\newcommand{\noverlap}{R^{(k)}}
\newcommand{\proba}[1]{\mathbb{P}_\lambda #1}
\newcommand{\Edis}[1]{\mathbb{E} #1}
\newcommand{\Etherm}[1]{\big\langle #1 \big\rangle}
\newcommand{\Ethdis}[1]{\mathbb{E} \big\langle #1 \big\rangle}
\newcommand{\ETherm}[1]{\Big\langle #1 \Big\rangle}
\newcommand{\EThdis}[1]{\mathbb{E} \Big\langle #1 \Big\rangle}
\newcommand{\Elam}[1]{\mathbb{E}_{\lambda} #1 }
\newtheorem{theorem}{Theorem}
\theoremstyle{plain}
\newtheorem{proposition}{Proposition}
\newtheorem{lemma}{Lemma}
\newtheorem{corollary}{Corollary}
\theoremstyle{plain}
\theoremstyle{plain}
\newtheorem{definition}{Definition}
\theoremstyle{remark}
\theoremstyle{definition}
\begin{document}

\title{Strong replica symmetry for high-dimensional disordered log-concave Gibbs measures}
\author{Jean Barbier$^1$, Dmitry Panchenko$^2$, and Manuel S\'aenz$^1$}
\date{\footnotesize{$^1$\emph{The Abdus Salam International Center for Theoretical Physics, Trieste, Italy.}\\
$^2$\emph{Department of Mathematics, University of Toronto, Ontario, Canada.}}}

\maketitle

\begin{abstract}
    {We consider a generic class of log-concave, possibly random, (Gibbs) measures. We prove the concentration of an infinite family of order parameters called multioverlaps. Because they completely parametrise the quenched Gibbs measure of the system, this implies a simple representation of the asymptotic Gibbs measures, as well as the decoupling of the variables in a strong sense. These results may prove themselves useful in several contexts. In particular in machine learning and high-dimensional inference, log-concave measures appear in convex empirical risk minimisation, maximum a-posteriori inference or M-estimation. We believe that they may be applicable in establishing some type of ``replica symmetric formulas'' for the free energy, inference or generalisation error in such settings.}
    {Bayesian inference, disordered systems, multioverlap concentration, replica symmetry}
\end{abstract}

\section{Introduction}

The concentration of \emph{overlaps order parameters} is key to the theoretical understanding of plenty of high-dimensional statistical models. There are few canonical situations in which this phenomenon, often called ``replica symmetry'' in statistical mechanics \cite{mezard1987spin,mezard2009information}, is known to occur: high temperature regions in phase diagrams of disordered spin systems in statistical physics \cite{Talagrand2011spina,Talagrand2011spinb}, low constraint density regimes of random combinatorial optimisation problems in computer science \cite{talagrand2001high,montanari2006counting,panchenko2014replica,bapst2016harnessing,coja2018charting,coja2018replica}, the so-called ``Nishimori line'' of planted statistical mechanics systems, that is equivalent to optimal a-posteriori Bayesian inference \cite{nishimori01,contucci2009spin,barbier2019overlap,barbier2020strong}, ferromagnetic models \cite{chatterjee2015absence,barbier2019concentration}, and, finally, log-concave measures (see \cite{saumard2014log} and references therein). This paper is concerned with this last case. 

Under this replica symmetric regime, ``replica symmetric formulas'' can be proved for the free energy or mutual information of the problem. Two general schemes have been successfully developed to establish such formulas: a rigorous version of the \emph{cavity method} \cite{mezard1987spin} (also called the ``Aizenman-Sims-Starr scheme'' in physics \cite{aizenman2003extended}, or ``leave-one-out'' approach in statistics) which was applied to spin glass and inference problems \cite{aizenman2003extended,Talagrand2011spina,Talagrand2011spinb,panchenko2013sherrington,2016arXiv161103888L,lesieur2017statistical,coja2018information}, and the \emph{interpolation} \cite{guerra2002thermodynamic,guerra2003broken,Talagrand2011spina,Talagrand2011spinb} and \emph{adaptive interpolation \cite{barbier2019adaptivesimple,barbier2019adaptivereplica} methods}. The last one is specifically designed for analysing inference problems \cite{aubin2018committee,barbier2019optimal,barbier2019transition,barbier2020all,barbier2018mutual,barbier2017layered,gabrie2018entropy,barbier2018adaptive,barbier2019mutual,luneau2020information,reeves2020information,alberici2021multi,alberici2020solution}. Other, less standard, approaches exist as well \cite{giurgiu2016spatial,XXT,barbier2020mutual,reeves2016replica,el2018estimation}. All these methods rely direclty or implicitely on the concentration of the overlaps. Note, however, that in the context of inference problems, at the moment the concentration of overlaps is under control only in the \emph{Bayes optimal setting} (or ``matched teacher-student scenario''), corresponding to the \emph{Nishimori line} \cite{contucci2009spin,nishimori01}. Outside the Nishimori line (i.e., in models with mismatch) crucial identities used for these proofs are lost. Therefore, establishing the concentration of the overlaps in certain \emph{non Bayes optimal regimes of inference} is fundamental, and is the main goal of the present contribution.

Furthermore, controlling all \emph{multioverlaps} (instead of only the $2$-replicas overlap, which is a special case of multioverlap) is expected to be necessary in proofs of replica symmetric formulas for systems with a sparse quenched disorder \cite{de2004random,barra2007stability}. But controlling multioverlaps between more than two replicas, and thus statistics of the Gibbs measure of the system of an order higher than two, is highly non-trivial. In fact, there exist few situations in which some control of multioverlaps has been achieved: in certain dilute spin glasses \cite{guerra2004high}, constraint satisfaction problems \cite{talagrand2001high,montanari2006counting,panchenko2014replica}, ferromagnetic models \cite{barbier2019concentration} and Bayes optimal inference \cite{barbier2020strong}. Here we show that all the multioverlaps of the system concentrate for a very generic class of log-concave random (Gibbs) measures. That is, \emph{strong replica symmetry} holds. This implies concentration with respect to both the Gibbs measure and the disorder; this later concentration has been recently shown to \emph{not} hold for the $2$-replicas overlaps of some sparse model \cite{coja2021sparse}, so one does not imply the other. As a corollary we obtain a particularly simple representation of the (possibly non-unique) asymptotic Gibbs measures, as well as a strong decoupling of the variables with respect to both the Gibbs measures and the inherent quenched disorder of the system.

An important prior contribution in this direction and an inspiration to our work are the papers \cite{franz2003replica,de2009self}, where equivalent relations to the Ghirlanda-Guerra \cite{ghirlanda1998general,panchenko2010ghirlanda} and Aizenman-Contucci \cite{aizenman1998stability} identities were derived for the multioverlaps, partly heuristically. These works were latter extended in \cite{barra2007stability,sollich2012spin}. In this paper we prove, in the context of log-concave measures, a version of the results present in \cite{barbier2020strong} for Bayesian optimal inference. In particular, here we manage to establish the results for the original model whenever the Hamiltonian is uniformly strictly concave, while in \cite{barbier2020strong} this is done only for a perturbed one. Also, the settings of both works are very different and the proof techniques are distinct in many important ways. Another relevant prior work is the analysis of the Shcherbina-Tirozzi model in Chapter 3 of \cite{Talagrand2011spina}, where properties of log-concave measures were used to prove the concentration of the order parameters, see also \cite{barbier2021performance}. However, their results differ from ours in that they only show this for the canonical $2$-replicas overlap and, also, in that they need the disorder measure to be log-concave, while this is not necessary for the proofs in this paper.

We expect our results to be useful in the study of non Bayes optimal log-concave inference problems and in general statistical mechanics models with sparse quenched disorder. One example would be Bayesian linear regression with mismatch. In \cite{barbier2021performance}, one instance of this problem is analysed by means of concentrations deriving from log-concavity (although, instead of using the results presented here, some similar tailor-made concentration bounds are derived for that particular instance). Another important example of a possible application would be compressed sensing with $\ell_1$ penalty and involving a sparse sensing matrix \cite{angelini2012compressed}, as in this case, the control of all multioverlaps would be necessary. Finally, this work might also have an impact on (robust) statistics \cite{huber2009robust} and convex optimisation \cite{boyd2004convex}.

\paragraph{Organisation of the paper.} The remaining of the paper is organised as follows. In Section \ref{sec:settings} we introduce the setting in which our results apply and the notations used throughout the paper along with our main results. These results are a direct consequence of the intermediate ones present in Section \ref{sec:intermediate}, that rely on the introduction of a perturbation of the system (this perturbation is an important tool for our proof). After that, in Section \ref{sec:technical}, we give a brief account on the necessary technical background needed for the proofs. Finally, in Section \ref{sec:proofs} the complete proofs are provided for the perturbed system. Then at the end of it we show that the perturbation can be ``removed'' to get back the main results stated in Section \ref{sec:settings} for the original (non perturbed) model. In the Appendix, some complementary proofs are included for the convenience of the reader.

\section{Setting and main results}\label{sec:settings}

Let $J$ be some collection of real valued random variables, which may be taken to be a vector, matrix, tensor, etc. Consider a bounded random vector $\sigma := (\sigma_1,\dots,\sigma_N) \in \Sigma_N := \interv^N$ distributed, conditionally on $J$, according to the Borel probability measure on the set $\Sigma_N$ defined by the probability density
\begin{equation}\label{eq:gibbs}
    G_N(\sigma\mid J) := \frac{1}{\partf(J)} \exp \hamil(\sigma \mid J) \quad \mbox{with}\quad \partf(J)=\int_{\Sigma_N} d\sigma \exp \hamil(\sigma \mid J)\,,
\end{equation}
where $\hamil$ is, for almost every $J$, some twice differentiable concave function of $\sigma$ and $\partf(J)$ is a normalisation constant. As we will discuss in further detail in Section \ref{sec:technical}, expression \eqref{eq:gibbs} defines a log-concave distribution. We can assume that the variables $\sigma_i$ belong to $\interv$ without loss of generality (w.l.g.) as long as they are bounded uniformly in $N$.

In this paper we use statistical mechanics nomenclature and notations. For instance, we will call the coordinates of the random vector $\sigma$ as \emph{spins}. The log-concave distribution \eqref{eq:gibbs} resulting from conditioning with respect to (w.r.t.) the disorder $J$ will be referred as the \emph{Gibbs distribution}, the concave function $\hamil$ that defines it as the \emph{Hamiltonian}, and $\partf(J)$ as the \emph{partition function}. Likewise, we will refer to conditionally independent and identically distributed samples from the Gibbs distribution corresponding to the same realisation of the disorder $J$ as \emph{replicas}.

As mentioned above, from now on and throughout the rest of the paper we will assume that the Hamiltonian is, for almost every realisation of $J$ (w.r.t. the disorder measure), twice differentiable (w.r.t. the spin variables) and concave. This implies that, for almost every $J$, its Hessian $H[\hamil]$ is a negative semi-definite matrix in every point of $\Sigma_N$. That is, for every vector $v \in \R^N$ and $\sigma \in \Sigma_N$ we have that
\begin{equation}\label{eq:concave}
    \big(v,H[\hamil](\sigma)v\big) = \sum_{i,j=1}^N v_i v_j \frac{d^2\hamil}{d\sigma_id\sigma_j}(\sigma)  \leq 0\,,
\end{equation}
where $(\cdot,\cdot)$ is the usual inner product of $\R^N$. We will also assume that the random Hamiltonian (due to the randomness $J$) is symmetric under exchanges of spin indices; that is, we have that for every permutation $P: [N] \to [N]$ of $[N]:=\{1,\ldots,N\}$, it holds that
\begin{equation}\label{eq:spin_symm}
    \hamil(\sigma_1,\dots,\sigma_N\mid J) \eqdist \hamil(\sigma_{P(1)},\dots,\sigma_{P(N)}\mid J)\,,
\end{equation}
where $\eqdist$ stands for equality in distribution.

We define the \emph{free entropy} as the log-partition function $F_N= F_N(J) := \ln\partf(J)$. Likewise, the \emph{mean free entropy} is
\begin{equation}
    \Edis{F_{N}}  := \E \ln  \int_{\Sigma_N} d\sigma \exp{\hamil(\sigma \mid J)}\,.
\end{equation}
For simplicity, and because we will mostly be concerned with the mean free entropy, when there is no ambiguity we will simply refer to $\E F_N$ as the free entropy of the system.

\paragraph{Gaussian regularisation.} Here we introduce what we call the \emph{Gaussian regularisation} (corresponding to a ``ridge regularisation'' in machine learning). Its role will be to ensure the concentration w.r.t. the regularised Gibbs measure (or ``thermal'' concentration) of generic order parameters coined multioverlaps; these are natural generalisations of the usual magnetisation and Edwards-Anderson overlap in spin glasses.

This regularisation added to the Hamiltonian is defined according to
\begin{equation}\label{eq:gauss_pert}
    \hamil^{\rm gauss}(\sigma) := - \frac{\eps_N}{2} \norm{\sigma}^2
\end{equation}
where $\norm{\,\cdot\,}$ is just the $L^2$ norm of $\R^N$. Here the regularisation strenght $\eps_N$ is such that 
\begin{equation*}
    1 \geq \eps_N \to 0 \ \ \ \ \mbox{and} \ \ \ \  N\eps_N \to +\infty\,.  
\end{equation*}
As we will see later, the first condition implies that the regularisation is $o(N)$ and therefore sub-dominant w.r.t. the original Hamiltonian, while the second condition implies the thermal concentration of the multioverlaps via the Brascamp-Lieb inequality.

Let us make a remark concerning the fact that the Gaussian regularisation is \emph{not} always required. As we will see later, the Gaussian regularisation is needed to ensure that, in every point of $\Sigma_N$, the Hessian of the Hamiltonian is upper bounded (in the sense of Loewner partial order) by $-\eps_N \mathbb{I}$. In the cases where this bound can be proved for the original Hamiltonian, this regularisation can be omitted. However, we include it in our results so that they apply to the general concave case.

From now on, for $i,l\geq1$, we use the notation $\sigma_i^l$ to refer to the $i$-th spin of the $l$-th replica (i.e., sample) of the regularised system with Hamiltonian $\hamil(\cdot \mid J)+\hamil^{\rm gauss}(\cdot)$. Whenever the replica index $l$ is omitted in this notation, it is understood that the replica in question is the first one:
$$\sigma:=\sigma^1\,.$$

We will also define the mean w.r.t. the Gibbs measure of the regularised model as $\Etherm{\,\cdot\,}_0$; namely, for any function $A$ of multiple replicas from the regularised model $(\sigma^l)_{l\in \mathcal{C}}$, with $\mathcal{C}$ some finite set of integers, we have
\begin{align}
 \big\langle A((\sigma^{l})_{l\in \mathcal{C}})\big\rangle_0 &:= \frac{1}{\tilde{\mathcal{Z}}_N^{|\mathcal{C}|}} \int_{\Sigma_N^{|\mathcal{C}|}} A((\sigma^{l})_{l\in \mathcal{C}}) \prod_{l \in \mathcal{C}} d\sigma^l\exp\big(\hamil(\sigma^{l} \mid J)+\hamil^{\rm gauss}(\sigma^{l})\big) \,,\label{bracket_soft_orig}
\end{align}
where $\tilde{\mathcal{Z}}_N$ is the partition function associated to the original Hamiltonian plus the Gaussian regularisation (if needed).

\paragraph{Perturbed free entropy variance.} Let $(s_N)_{N\geq1}$ be some sequence of non-negative real numbers s.t. $s_N \to +\infty$ and $s_N/N \to 0_+$ as $N\to+\infty$. Denote by $\V_N$ the set of real continuous functions on $\Sigma_N$ whose absolute value is uniformly bounded by $s_N$. 

We will define the perturbed free entropy of the model according to 
\begin{equation}
    {F^{(V)}_N}  := {\ln  \int_{\Sigma_N} d\sigma\exp{\big(\hamil(\sigma \mid J)+\hamil^{\rm gauss}(\sigma)+V(\sigma)\big)}}\,.
\end{equation}
Then, the perturbed free entropy variance will be given by 
\begin{equation}\label{eq:def_v_N}
    v_N := \sup_{V \in \mathcal{V}_N} \E\big[ (F^{(V)}_N - \E F^{(V)}_N )^2 \big].
\end{equation}
In typical situations, $\Var F_N$ can be proved to be $\bigo{N}$, which means that $F_N/N$ concentrates. For our results, we will assume that $v_N$ scales in this same manner; that is, $v_N = \bigo{N}$. This is expected to be the case in most situations where the original free entropy concentrates, as the variance of the free entropy is determined by the randomness in the model and the $V(\sigma)$ are non-random. In Section \ref{sec:intermediate} we will, at the expense of introducing further notation, slightly relax this condition of perturbed free energy concentration.

\paragraph{Multioverlaps.} Our main object of interest is a family of order parameters coined \emph{multioverlaps}. These fully encode the quenched Gibbs measure of the replicated system, as the joint moments $\E\langle \prod_{(i,l)\in \mathcal{C}} \sigma_{i}^{l}\rangle$ can be re-expressed as functions of multioverlaps and vice-versa \cite{barbier2020strong}. Special cases of multioverlap are the magnetisation $N^{-1} \sum_{i=1}^N \sigma_i$ as well as the usual 2-replica overlap (or Edwards-Anderson order parameter):
\begin{equation}
    R := \frac{1}{N} \sum_{i=1}^N \sigma_i^1 \sigma_i^2\,.
\end{equation}
For each $n\geq1$, define the sets $K_n := \mathbb{N}^n$ and $K := \bigcup_{n\geq1} K_n$. Similarly to the overlap, given $n\geq1$ and $k \in K_n$, we define the associated \emph{multioverlap} by
\begin{equation}
    \noverlap := \frac{1}{N} \sum_{i=1}^N \prod_{l=1}^n (\sigma_i^{l})^{k_l}\,.
\end{equation}
Note that in the definition of multioverlaps, we assume that the replicas involved are the first $n$ ones. But because replicas are exchangeable, they may be taken to be any replica set without affecting any computation.


\subsection{Main results}\label{sec:results}

Our first result establishes the thermal concentration of all the multioverlaps $(R^{(k)})_{k\in K}$ and is a consequence of Brascamp-Lieb's variance inequality.
\begin{theorem}[Thermal multioverlap concentration]\label{prop:mo_thm_conc}
    Assume that the Hessian of $\hamil$ exists and is a negative semi-definite matrix for every $\sigma\in\Sigma_N$. Then, for all $n\geq 1$ and $k \in K_n$,
    \begin{equation*}
        \Big\langle \big(\noverlap - \big\langle{\noverlap\big\rangle_0\big)^2 \Big\rangle_0 \leq \frac{\|k\|^2}{N \eps_N}}\,.
    \end{equation*}
\end{theorem}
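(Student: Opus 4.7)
The plan is to derive the bound as a direct application of the Brascamp--Lieb variance inequality, applied to the product (replicated) regularised Gibbs measure on $\Sigma_N^n$. Recall that Brascamp--Lieb states: if $d\mu \propto e^{-\Phi(x)}\,dx$ is a log-concave probability measure on a convex domain and $H[\Phi](x) \succeq C$ uniformly for some positive-definite matrix $C$, then for every smooth $f$,
\begin{equation*}
\Var_\mu(f) \;\leq\; \mathbb{E}_\mu\!\left[\langle \nabla f,\, C^{-1}\nabla f\rangle\right].
\end{equation*}
I would state this as the main analytic input (it will be among the tools recalled in Section~\ref{sec:technical}) and then carry out the application below.

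First, I would identify the right product measure and its lower curvature bound. The $n$-replicated regularised Gibbs measure on $\Sigma_N^n$ has negative log-density $\Phi(\sigma^1,\dots,\sigma^n) = -\sum_{l=1}^n \big(\hamil(\sigma^l\mid J) + \hamil^{\rm gauss}(\sigma^l)\big)$ (up to an additive constant). Its Hessian is block-diagonal with $l$-th block equal to $-H[\hamil](\sigma^l) + \eps_N \mathbb{I}_N$. By the assumed concavity of $\hamil$ (see \eqref{eq:concave}), each block satisfies $-H[\hamil](\sigma^l) \succeq 0$, so the full Hessian satisfies $H[\Phi] \succeq \eps_N \mathbb{I}_{Nn}$. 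Hence we may take $C = \eps_N \mathbb{I}_{Nn}$ and $C^{-1} = \eps_N^{-1} \mathbb{I}_{Nn}$.

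Next, I would bound the gradient of the multioverlap viewed as a function of all $Nn$ variables $(\sigma_i^l)_{i\leq N, l\leq n}$. A direct computation gives
\begin{equation*}
\frac{\partial R^{(k)}}{\partial \sigma_i^l}(\sigma^1,\dots,\sigma^n) \;=\; \frac{k_l}{N}\,(\sigma_i^l)^{k_l-1}\!\!\prod_{l'\neq l}(\sigma_i^{l'})^{k_{l'}},
\end{equation*}
which, using $|\sigma_i^l|\leq 1$ on $\Sigma_N$, is bounded in absolute value by $k_l/N$. Summing the squares,
\begin{equation*}
\|\nabla R^{(k)}\|^2 \;\leq\; \sum_{l=1}^n \sum_{i=1}^N \frac{k_l^2}{N^2} \;=\; \frac{\|k\|^2}{N}.
\end{equation*}
Plugging into Brascamp--Lieb for the product measure yields
\begin{equation*}
\big\langle (R^{(k)} - \langle R^{(k)}\rangle_0)^2 \big\rangle_0 \;\leq\; \frac{1}{\eps_N}\big\langle \|\nabla R^{(k)}\|^2\big\rangle_0 \;\leq\; \frac{\|k\|^2}{N\eps_N},
\end{equation*}
which is the claim.

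The only subtle point is the use of Brascamp--Lieb on the bounded convex domain $\Sigma_N^n$ rather than $\R^{Nn}$: this is handled by the standard extension of the inequality to strongly log-concave measures supported on convex sets (equivalently, by treating $\Phi$ as $+\infty$ outside $\Sigma_N^n$ and approximating), which I would cite from the technical preliminaries. Beyond that, the proof is essentially a one-line gradient estimate together with a curvature bound.
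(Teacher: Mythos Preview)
Your proof is correct and follows essentially the same approach as the paper: apply the simplified Brascamp--Lieb inequality (Corollary~\ref{cor:brascampstrong}) to the $n$-replicated regularised Gibbs measure, using the $\eps_N\mathbb{I}$ curvature lower bound from the Gaussian regularisation together with the pointwise gradient bound $\|\nabla R^{(k)}\|^2 \le \|k\|^2/N$. The paper states Brascamp--Lieb directly on a convex domain, so your remark about the bounded support is already absorbed into the cited technical input.
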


As a consequence we have the following asymptotic thermal decorrelation result.
\begin{corollary}[Asymptotic thermal decorrelation]\label{cor:asymp_indep_thm}
    Under the hypotheses of Theorem~\ref{prop:mo_thm_conc} and assuming that the regularisation $N \varepsilon_N \to +\infty$, for every $k\geq 1$, any collection $h_1,\dots,h_k:[-1,1] \to \R$ of continuous functions, and any set of distinct spin indices $(i_{j})_{j\le k}$ we have that 
    \begin{equation*}
        \Big\langle {\prod_{j=1}^k h_j(\sigma_{i_j})}\Big \rangle_0 - \prod_{j=1}^k \Big\langle {h_j(\sigma_{i_j})}\Big \rangle_0 \xrightarrow{\mathbb{P}} 0\,.
    \end{equation*}
\end{corollary}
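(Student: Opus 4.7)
The plan is to reduce to monomials via Stone-Weierstrass, exploit the spin permutation symmetry \eqref{eq:spin_symm} to replace a pointwise statement by an average over index tuples, and finally apply Theorem~\ref{prop:mo_thm_conc} to each term of a replica expansion. Since every $h_j$ is continuous on the compact interval $[-1,1]$ and the spins are uniformly bounded by $1$, a standard $\varepsilon/3$ argument on both the joint and factorised thermal averages reduces the statement to monomials $h_j(x)=x^{a_j}$ with $a_j\in\mathbb{N}$. Setting
$$X_N(i_1,\ldots,i_k):=\Big\langle\prod_{j=1}^k\sigma_{i_j}^{a_j}\Big\rangle_0-\prod_{j=1}^k\Big\langle\sigma_{i_j}^{a_j}\Big\rangle_0,$$
the crucial consequence of \eqref{eq:spin_symm} is that the joint law of the quenched Gibbs measure and the disorder is invariant under arbitrary relabelling of spin indices. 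Hence $\mathbb{E}[X_N^2(i_1,\ldots,i_k)]$ does not depend on the distinct tuple and equals its average over all such ordered distinct tuples.

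Next I would expand $X_N^2=A^2-2AB+B^2$ with $A:=\langle\prod_j\sigma_{i_j}^{a_j}\rangle_0$ and $B:=\prod_j\langle\sigma_{i_j}^{a_j}\rangle_0$, and use the conditional independence of replicas under $\langle\cdot\rangle_0$ to merge each product of thermal averages into a single thermal average over disjoint replicas:
\begin{align*}
A^2=\Big\langle\prod_j(\sigma_{i_j}^1\sigma_{i_j}^2)^{a_j}\Big\rangle_0,\quad AB=\Big\langle\prod_j(\sigma_{i_j}^1\sigma_{i_j}^{j+1})^{a_j}\Big\rangle_0,\quad B^2=\Big\langle\prod_j(\sigma_{i_j}^{2j-1}\sigma_{i_j}^{2j})^{a_j}\Big\rangle_0,
\end{align*}
using respectively $2$, $k+1$, and $2k$ replicas. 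Summing each of these three quantities over all $(i_1,\ldots,i_k)\in[N]^k$ and dividing by $N^k$ factorises the spatial sums into a thermal average of a product of $k$ two-replica multioverlaps of type $R^{(a_j,a_j)}$ over prescribed replica pairs; this sum differs from the corresponding average over distinct tuples by $O(1/N)$ uniformly in $J$, since the $O(N^{k-1})$ non-distinct tuples contribute summands bounded by $1$.

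Theorem~\ref{prop:mo_thm_conc} then gives thermal concentration of each such two-replica multioverlap at rate $(N\varepsilon_N)^{-1}\to 0$, and replica exchangeability of $\langle\cdot\rangle_0$ combined with conditional independence of replicas yields $\langle R^{(a_j,a_j)}\rangle_0=N^{-1}\sum_i\langle\sigma_i^{a_j}\rangle_0^2=:q_j(J)$, independently of the chosen distinct replica pair. A telescoping identity combined with Cauchy-Schwarz and the pointwise bound $|R^{(a_j,a_j)}|\leq 1$ then shows that each of the three thermal-average-of-products above converges in $L^1$ over the disorder to the common limit $\mathbb{E}[\prod_j q_j(J)]$. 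The three contributions therefore combine as $1-2+1=0$, giving $\mathbb{E}[X_N^2]\to 0$ and hence $X_N\xrightarrow{\mathbb{P}}0$. The main obstacle will be the replica bookkeeping needed to express $X_N^2$ as a single thermal average to which Theorem~\ref{prop:mo_thm_conc} can be applied; the $O(1/N)$ distinct-tuple correction and the telescoping bound for products of concentrating quantities are then routine.
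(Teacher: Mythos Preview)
Your argument is correct and takes a genuinely different route from the paper's. The paper proves Corollary~\ref{cor:asymp_indep_thm} via the Aldous--Hoover machinery: along any convergent subsequence of system sizes, Theorem~\ref{prop:mo_thm_conc} forces the asymptotic spin representation to drop its $w_l$-dependence (this is Corollary~\ref{cor:thm_pure_state}), and thermal decorrelation then follows immediately from the product structure $\sigma(u,v_i,x_{i,l})$ upon integrating out the independent $x_{i,l}$'s; a subsequence-of-subsequence argument upgrades this to convergence in probability along the full sequence. Your approach instead gives a direct quantitative $L^2$ bound: you exploit the pointwise-in-$J$ rate of Theorem~\ref{prop:mo_thm_conc} together with the replica structure to write the tuple-averaged $X_N^2$ as a combination of thermal averages of products of two-replica overlaps, each of which concentrates at the common random target $\prod_j q_j(J)$ at rate $(N\varepsilon_N)^{-1/2}$ uniformly in $J$, so that the $1-2+1$ cancellation yields $\mathbb{E}[X_N^2]\to 0$. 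This bypasses the Aldous--Hoover representation entirely, is more elementary, and delivers an explicit rate (modulo the polynomial approximation step). The paper's route, by contrast, produces the structural representation of Corollary~\ref{cor:thm_pure_state} as a byproduct, which it reuses later in the analysis of the perturbed model. One small wording point: in your last paragraph, the common limit of the three thermal-average-of-products is the \emph{random} quantity $\prod_j q_j(J)$, not its expectation; the cancellation happens at the level of functions of $J$ (uniformly, by your telescoping bound), and only afterwards do you take $\mathbb{E}$. The argument is unaffected.
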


Our main result is a much stronger concentration theorem, namely self-averaging w.r.t. the quenched Gibbs measure. 

\begin{theorem}[Strong replica symmetry]\label{thm:mo_original_conc}
    Suppose that the Hessian of $\hamil$ exists and is a negative semi-definite matrix for every $\sigma\in\Sigma_N$, that the exchangeability of spins \eqref{eq:spin_symm} holds, and that there exists a sequence $(s_N)_{N\geq1}$ s.t. $s_N \to +\infty$, $s_N/N \to 0_+$ and s.t. the free entropy variance verifies $v_N^{1/2}/s_N \to 0_+$. Then, for all $k\in K$ we have that
\begin{equation*}
    \mathbb{E}{\Big\langle \big(\noverlap - \mathbb{E}\big\langle\noverlap\big\rangle_0\big)^2 \Big\rangle}_0 \xrightarrow{N\to+\infty} 0\,.
\end{equation*}
\end{theorem}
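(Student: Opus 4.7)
\textbf{The plan} is to use the standard total-variance decomposition
\begin{equation*}
\mathbb{E}\big\langle (\noverlap - \mathbb{E}\langle \noverlap\rangle_0)^2 \big\rangle_0 = \mathbb{E}\big\langle (\noverlap - \langle \noverlap\rangle_0)^2\big\rangle_0 + \mathbb{E}\big(\langle \noverlap\rangle_0 - \mathbb{E}\langle \noverlap\rangle_0\big)^2\,.
\end{equation*}
The thermal summand vanishes immediately by Theorem \ref{prop:mo_thm_conc} and the assumption $N\varepsilon_N\to\infty$. The whole task is therefore the disorder concentration $\mathbb{E}(\langle \noverlap\rangle_0 - \mathbb{E}\langle \noverlap\rangle_0)^2 \to 0$. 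Because the replicas are conditionally i.i.d.~given $J$, one has $\langle \noverlap\rangle_0 = N^{-1}\sum_{i=1}^N\prod_{l=1}^n\langle \sigma_i^{k_l}\rangle_0$; combining this product structure with the spin exchangeability \eqref{eq:spin_symm} and Cauchy--Schwarz applied to covariances, it suffices to prove that each single-replica moment $\langle \sigma_1^{k}\rangle_0$ concentrates in $J$.

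\textbf{The core step} follows the ``perturbation plus free-entropy concentration'' strategy developed in Section \ref{sec:intermediate}. For fixed $k\geq 1$ I introduce the deterministic perturbation $V_\lambda(\sigma) := \lambda\,\sigma_1^{k}$ with $\lambda\in[0,s_N]$, which lies in $\V_N$ since $|\sigma_1^k|\leq 1$. The perturbed free entropy $\lambda\mapsto F_N^{(V_\lambda)}$ is convex; its first $\lambda$-derivative equals $\langle \sigma_1^k\rangle_{V_\lambda}$ and its second derivative is the non-negative thermal variance of $\sigma_1^k$. A classical convex-function fluctuation lemma---bounding the disorder standard deviation of a convex function's derivative, on average over $\lambda$, by that of the function itself (of order $\sqrt{v_N}$) plus a small thermal remainder controlled via the uniform bound $|\partial_\lambda F_N^{(V_\lambda)}|\leq 1$---then yields that the $\lambda$-averaged quantity
\begin{equation*}
\frac{1}{s_N}\int_0^{s_N} \mathbb{E}\big(\langle \sigma_1^k\rangle_{V_\lambda} - \mathbb{E}\langle \sigma_1^k\rangle_{V_\lambda}\big)^2 d\lambda
\end{equation*}
is bounded by a positive power of $v_N^{1/2}/s_N$, and therefore tends to zero by hypothesis.

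\textbf{The final step} removes the perturbation and transfers this $\lambda$-averaged concentration to $\lambda=0$. The Gaussian regularisation guarantees the uniform Hessian bound $-\varepsilon_N\,\mathbb{I}$, which via Brascamp--Lieb controls the thermal variance and hence the Lipschitz constant of $\lambda \mapsto \langle \sigma_1^k\rangle_{V_\lambda}$. Combined with the monotonicity of this map in $\lambda$ and the vanishing $\lambda$-averaged disorder variance, this should produce pointwise disorder concentration at $\lambda=0$. Propagating back through the product formula $\langle \noverlap\rangle_0 = N^{-1}\sum_i\prod_l\langle\sigma_i^{k_l}\rangle_0$ then completes the proof.

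\textbf{The main obstacle} I expect is precisely the de-perturbation step, which requires a delicate coordination of the three scales ($\varepsilon_N$, $s_N$, $\sqrt{v_N}$) so that the $\lambda$-averaged disorder bound and the Lipschitz transfer to $\lambda=0$ close simultaneously. This is where the uniform strict log-concavity ensured by the Gaussian regularisation plays an essential role, and where the present (non Bayes-optimal) setting departs substantially from the Nishimori-line arguments of \cite{barbier2020strong}.
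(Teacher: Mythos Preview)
Your plan contains a genuine gap at its very first reduction. You propose to show that each single--spin moment $\langle \sigma_1^{k}\rangle_0$ concentrates in $J$, and then propagate this through the product formula $\langle \noverlap\rangle_0 = N^{-1}\sum_i\prod_l\langle\sigma_i^{k_l}\rangle_0$. The implication ``single--spin concentration $\Rightarrow$ multioverlap concentration'' is of course correct, but the target you are aiming at is strictly stronger than the theorem, and in fact \emph{false} in general. Corollary~\ref{cor:asymp_spin_dist} says that along any convergent subsequence the spin variables have an Aldous--Hoover limit of the form $\sigma(v_i,x_{i,l})$, so that asymptotically $\langle \sigma_1^{k}\rangle_0$ converges in law to $\int x^{k}\,d\mu_1(x)$ where $\mu_1$ is a random measure sampled from some $\nu$. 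Unless $\nu$ is a Dirac mass, this limit is genuinely random: the local moment $\langle \sigma_1^{k}\rangle_0$ does \emph{not} concentrate, only the spatial average $N^{-1}\sum_i\langle\sigma_i^{k}\rangle_0$ does. Think of any model with random local fields: $\langle\sigma_1\rangle_0$ depends non-trivially on the field at site~$1$.

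Accordingly, your perturbation argument cannot close. The single--site perturbation $V_\lambda(\sigma)=\lambda\sigma_1^{k}$ with $\lambda\in[0,s_N]$ does give $\lambda$--averaged concentration of $\langle\sigma_1^{k}\rangle_{V_\lambda}$, but this is largely trivial: for $\lambda$ of order~$1$ or larger the bias forces $\langle\sigma_1^{k}\rangle_{V_\lambda}\to 1$ regardless of $J$, so most of the interval $[0,s_N]$ contributes nothing informative. Your de--perturbation step then has to move from a generic $\lambda$ back to $\lambda=0$ using the Brascamp--Lieb Lipschitz bound $|\partial_\lambda\langle\sigma_1^{k}\rangle_{V_\lambda}|\le k^2/\varepsilon_N$, which \emph{diverges}. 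A back--of--the--envelope shows that any nonzero variance $\delta$ at $\lambda=0$ survives only on an interval of length $O(\varepsilon_N)$, so its contribution to the $\lambda$--average is $O(\delta\,\varepsilon_N/s_N)=o(1)$: no contradiction is obtained. Monotonicity of $\lambda\mapsto\langle\sigma_1^{k}\rangle_{V_\lambda}$ does not rescue this; it gives no lower bound on the derivative near $\lambda=0$.

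The paper avoids this by never reducing to a single spin. Its Poisson perturbation \eqref{eq:poiss_pert} acts on a \emph{random subset of $\sim s_N$ spins}, so that the associated ``energy'' $E_I(\sigma)=\sum_{j\le\pi_I}P_I(\sigma_{U_j^I})$ is extensive at scale $s_N$. The convexity / free--entropy argument (Lemma~\ref{lem:ener_conc}) then yields concentration of $E_I$, which is turned into a Franz--de Sanctis identity (Theorem~\ref{thm:FdS_ineq}) and a decoupling lemma (Lemma~\ref{lem:decouplig_lemma}). These force, in any Aldous--Hoover subsequential limit, that the quantities $\int_0^1\bar\sigma^{(k)}(u,v)\,dv$ are a.s.\ independent of $u$; an induction over the lexicographic order on $K$ then shows all $R^{(k)}_\infty$ are a.s.\ constant. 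Only afterwards is the Poisson perturbation removed (Proposition~\ref{prop:thm_mean_dif}), and there Brascamp--Lieb works precisely because one is comparing \emph{extensive} averages, for which the Lipschitz constant scales correctly. If you want to salvage your scheme, the perturbation has to couple to an extensive observable, not to a single spin.
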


Note that the assumption that $v_N^{1/2}/s_N$ must be a vanishing sequence implicitly requires that, for every $V\in\V_N$, the perturbed free energy $F^{(V)}_N$ does not deviate too much from its mean $\E F^{(V)}_N$. The higher its variance $v_N$, the faster the sequence $s_N$ must grow (under the constraint $s_N = o(N)$). Therefore this (weak) assumption restricts the class of models to ``well-behaved'' ones (which is a very large set of relevant statistical models, including those mentionned in the introduction).

\paragraph{Remark concerning planted models.} In the context of Bayesian inference models (also called ``planted models'' in spin glass literature), the disorder $J$ usually consists in part in some ``ground-truth'' vector $\sigma^*$ that is to be reconstructed, and on which the data available to the statistician depends (we may assume that $\sigma^*$ belongs to $\Sigma_N$). In this context it may be interesting to also prove the concentration of the  correlation $m^*:=N^{-1} \sum_{i=1}^N \sigma^*_i \sigma_i$ between the vector $\sigma$, interpreted as an estimator, and the ground-truth. This correlation is a measure of how strongly the estimator $\sigma$ ``magnetises'' onto the hidden $\sigma^*$, and is therefore a natural way to quantify the quality of inference. Note that a similar Gaussian regularisation as the one introduced in the previous section can be used to enforce the thermal concentration of $m^*$. Define $\tilde{\mathcal{H}}^{\rm gauss}_N(\sigma) := - (\tilde \eps_N/2) \sum_{i=1}^N (\sigma_i^* \sigma_i)^2$, where $\tilde \eps_N$ is another sequence such that $\tilde \eps_N \to 0_+$ and $N\tilde\eps_N \to +\infty$. Take the regularised Hamiltonian of  the system to be given by $\hamil + \mathcal{H}^{\rm gauss}_N + \tilde{\mathcal{H}}^{\rm gauss}_N$. Then, Theorem \ref{thm:mo_original_conc} still holds adding this second Gaussian regularisation. Furthermore, if we assume that (for every $i\in[N]$) $\mathbb{P}(\sigma_i^*=0)=0$, we can make the change of variables $\tilde \sigma_i := \sigma_i^* \sigma_i$. Thus, Theorem \ref{thm:mo_original_conc} still holds for these new variables as the Hamiltonian with the new variables is a concave and exchange-symmetric function of $\tilde \sigma$. Then, we have that the associated multioverlaps of $\tilde \sigma$ also concentrate; in particular, so does $m^*$. Therefore, we have that the multioverlaps $(R^{(k)})_{k\in K}$ concentrate along with $m^*$.\\

Theorem \ref{thm:mo_original_conc} implies a simple representation for the asymptotic distribution of the spins.
\begin{corollary}[Asymptotic spin distribution]\label{cor:asymp_spin_dist}
    Under the hypotheses of Theorem \ref{thm:mo_original_conc}, for every subsequence $(N_j)_{j\geq1}$ s.t. the replicated system $(\sigma_i^l)_{i,l\geq1}$  converges in distribution along it, there exists a (non random) probability measure $\nu(\cdot)$ over the set of Borel probability measures on $\interv$ such that, for all $i\geq1$, the spin variables $(\sigma_i^l)_{l\geq1}$ converge jointly in distribution towards independent samples from $\mu_i(\cdot)$; where the $(\mu_i)_{i\ge 1}$ are i.i.d. random measures distributed according to $\nu$.
\end{corollary}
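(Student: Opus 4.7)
The plan is a two-stage de Finetti argument on the limit array, with Theorem~\ref{thm:mo_original_conc} used to collapse the first directing measure to a deterministic one. Along the convergent subsequence $(N_j)_{j\ge 1}$, denote by $(\tau_i^l)_{i,l\ge 1}$ the in-distribution limit of $(\sigma_i^l)_{i,l\ge 1}$. Replicas are exchangeable in $l$ by construction, and by the spin symmetry assumption~\eqref{eq:spin_symm} the array is also exchangeable in $i$. Grouping spins into rows $X_i := (\tau_i^l)_{l\ge 1} \in \interv^{\mathbb{N}}$, the sequence $(X_i)_{i\ge 1}$ is exchangeable in the Polish space $\interv^{\mathbb{N}}$, so de Finetti's theorem furnishes a random probability measure $M$ on $\interv^{\mathbb{N}}$ such that, conditional on $M$, the $X_i$ are i.i.d.\ samples from $M$.

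The heart of the argument is to show that $M$ is almost surely equal to a deterministic measure $M^*$. Fix $n\ge 1$ and $k=(k_l)_{l\le n}\in K_n$, set $Y_i := \prod_{l\le n}(\tau_i^l)^{k_l}$, and write $m_k(M) := \int \prod_l (x^{(l)})^{k_l}\, dM(x)=\mathbb{E}[Y_1\mid M]$. The conditional i.i.d.\ structure yields $\mathbb{E}[m_k(M)^2]=\mathbb{E}[Y_1 Y_2]$. At finite $N$, expanding $(R^{(k)})^2$ and using spin exchangeability gives
\[
\mathbb{E}\big\langle (R^{(k)})^2 \big\rangle_0 \;=\; \frac{1}{N}\,\mathbb{E}\Big\langle \prod_l(\sigma_1^l)^{2k_l} \Big\rangle_0 \;+\; \frac{N-1}{N}\,\mathbb{E}\Big\langle \prod_l(\sigma_1^l)^{k_l}(\sigma_2^l)^{k_l} \Big\rangle_0
\]
while $\mathbb{E}\langle R^{(k)}\rangle_0 = \mathbb{E}\langle \prod_l(\sigma_1^l)^{k_l}\rangle_0$. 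Passing to the limit along $(N_j)$, the uniform boundedness of the spins transfers convergence in distribution to convergence of all joint moments of fixed collections of spins; the right-hand sides tend to $\mathbb{E}[Y_1 Y_2]$ and $\mathbb{E}[Y_1]$. Theorem~\ref{thm:mo_original_conc} then forces $\mathbb{E}[Y_1 Y_2] = (\mathbb{E}[Y_1])^2$, equivalently $\mathrm{Var}(m_k(M))=0$, so $m_k(M)$ is almost surely a constant. Repeating for every $k\in K$ shows all finite-dimensional joint moments of $M$ are deterministic. Since $\interv$ is compact, moments determine the finite-dimensional marginals, and Kolmogorov's extension theorem gives $M=M^*$ a.s.\ for a fixed $M^*$ on $\interv^{\mathbb{N}}$.

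The second stage applies de Finetti to $M^*$ itself. By replica exchangeability of $(\tau_i^l)$, the common law $M^*$ of the rows is an exchangeable measure on $\interv^{\mathbb{N}}$; de Finetti then provides a (non-random, since $M^*$ is deterministic) probability measure $\nu$ on the set of Borel probability measures on $\interv$ with $M^*=\int \rho^{\otimes\mathbb{N}}\,d\nu(\rho)$. Equivalently, a sample $X_i$ from $M^*$ is generated by drawing $\mu_i\sim\nu$ and then sampling $(\tau_i^l)_{l\ge 1}$ i.i.d.\ from $\mu_i$; in fact $\mu_i$ can be recovered a.s.\ from $X_i$ as the limit empirical measure $\lim_L \frac{1}{L}\sum_{l\le L}\delta_{\tau_i^l}$. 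Because the $X_i$ are unconditionally i.i.d., so are the pairs $(\mu_i,X_i)$, and in particular $(\mu_i)_{i\ge 1}$ are i.i.d.\ samples from $\nu$. Given $(\mu_i)$ the rows are conditionally independent with $\tau_i^l$ i.i.d.\ $\mu_i$ in $l$, which is precisely the conclusion of the corollary.

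The main obstacle is the variance identity in the middle step: one must carefully transfer Theorem~\ref{thm:mo_original_conc} across the subsequential limit and reinterpret it as a statement about the random directing measure $M$. The uniform boundedness of the spins makes this transfer painless (all joint moments of any fixed collection of spins converge), and it is precisely what rules out ``global'' disorder-induced randomness in the limiting Gibbs measure, leaving only the per-spin i.i.d.\ structure carried by $(\mu_i)$.
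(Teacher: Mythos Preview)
Your proof is correct, and it takes a genuinely different route from the paper's argument. The paper works through the Aldous--Hoover representation $\sigma(u,v_i,w_l,x_{i,l})$ for the separately exchangeable limit array (set up in Section~\ref{SecAHsoft1}): the concentration from Theorem~\ref{thm:mo_original_conc} forces each $R^{(k)}_\infty(u,w_1,\dots,w_n)$ to be almost surely constant, which lets one freeze $u$ and the $w_l$'s to fixed values and pass to the reduced representation $\tilde\sigma(v_i,x_{i,l})$; the statement of the corollary is then read off by rewriting this two-variable representation in terms of random measures. You instead run two nested de~Finetti arguments: first on the rows $X_i=(\tau_i^l)_{l\ge 1}$ to produce a directing random measure $M$ on $\interv^{\mathbb N}$, then use the moment identity $\mathrm{Var}(m_k(M))=\mathbb{E}[Y_1Y_2]-(\mathbb{E}[Y_1])^2$ together with Theorem~\ref{thm:mo_original_conc} (transferred through the limit via boundedness) to collapse $M$ to a deterministic $M^*$, and finally apply de~Finetti to the exchangeable law $M^*$ to obtain $\nu$.

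The two arguments are close cousins---Aldous--Hoover is the array analogue of de~Finetti---but they emphasise different things. The paper's approach is natural in context because the Aldous--Hoover machinery (Lemmas~\ref{lem:mo_limit1}--\ref{lem:mo_limit2}, Corollary~\ref{cor:thm_pure_state}) is already in place and is used elsewhere, notably in the proof of Proposition~\ref{prop:mo_conc}; it also makes transparent which of the four Aldous--Hoover variables $(u,v,w,x)$ is being ``killed'' at each stage (thermal concentration removes $w$, disorder concentration removes $u$). Your route is more self-contained and elementary for this particular corollary: it needs only de~Finetti on a Polish space, the compactness of $\interv$ for moment determinacy, and the a.s.\ recoverability of the directing measure from the row (via empirical measures) to propagate the i.i.d.\ structure to the $\mu_i$'s. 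Either argument delivers the corollary; yours would work just as well as a standalone proof.
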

Note that the measure $\nu(\cdot)$ is not necessarily unique and may a priori depend on the subsequence $(N_j)_{j\geq1}$. We also obtain as a further consequence the following strong asymptotic independence.  
\begin{corollary}[Strong asymptotic spin independence]\label{cor:asymp_indep_disthm}
    Under the hypotheses of Theorem \ref{thm:mo_original_conc}, for every $k\geq 1$ and any collection $h_1,\dots,h_k:[-1,1] \to \R$ of continuous functions, we have that
    \begin{equation*}
        \E \Big\langle {\prod_{j=1}^k h_j(\sigma_{j})}\Big \rangle_0 - \prod_{j=1}^k \E \Big\langle {h_j(\sigma_{j})}\Big\rangle_0 \xrightarrow{N\to+\infty} 0\,.
    \end{equation*}
\end{corollary}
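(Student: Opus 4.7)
The plan is to reduce the claim, via Weierstrass approximation and spin exchangeability, to the multioverlap concentration already established in Theorem~\ref{thm:mo_original_conc}. Since each $h_j$ is continuous on the compact set $\interv$, for every $\eta>0$ there exist polynomials $p_j$ with $\|h_j-p_j\|_\infty<\eta$. Because $|\sigma_i|\leq 1$ and all averages involved are probability averages, a standard telescoping bound on $|\prod_j h_j-\prod_j p_j|$ shows that replacing each $h_j$ by $p_j$ in both terms of the claimed difference changes their value by at most $C\eta$, where $C$ depends only on $k$ and $\max_j\|h_j\|_\infty$. By multilinearity of the expression $\E\langle\prod_j h_j(\sigma_j)\rangle_0 - \prod_j \E\langle h_j(\sigma_j)\rangle_0$ in the $h_j$, it then suffices to prove the claim for monomials $h_j(\sigma)=\sigma^{k_j}$, $k_j\in\mathbb{N}$.

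For monomials, I relate both terms to multioverlap statistics. By spin exchangeability \eqref{eq:spin_symm}, the quantity $\E\langle\prod_{j=1}^k \sigma_{i_j}^{k_j}\rangle_0$ takes the same value for every ordered $k$-tuple of distinct indices $(i_1,\dots,i_k)$; call this common value $A:=\E\langle\prod_{j=1}^k \sigma_j^{k_j}\rangle_0$. Writing $R^{(k_j)}=N^{-1}\sum_i\sigma_i^{k_j}$ for the single-replica multioverlap ($n=1$ in the paper's notation), expanding and grouping distinct vs non-distinct tuples gives
\begin{equation*}
    \E\Big\langle\prod_{j=1}^k R^{(k_j)}\Big\rangle_0 = \frac{1}{N^k}\sum_{i_1,\dots,i_k=1}^N \E\Big\langle\prod_{j=1}^k \sigma_{i_j}^{k_j}\Big\rangle_0 = \frac{N(N-1)\cdots(N-k+1)}{N^k}\,A + \bigo{1/N},
\end{equation*}
where the $\bigo{N^{k-1}}$ non-distinct tuples each contribute a bounded summand. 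Hence $A=\E\langle\prod_j R^{(k_j)}\rangle_0+\bigo{1/N}$. Moreover, spin exchangeability also gives $\E\langle\sigma_j^{k_j}\rangle_0=\E\langle R^{(k_j)}\rangle_0=:\bar R^{(k_j)}$, so $B:=\prod_j\E\langle\sigma_j^{k_j}\rangle_0=\prod_j \bar R^{(k_j)}$ holds exactly.

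The final step is a telescoping estimate. Since $|R^{(k_j)}|,|\bar R^{(k_j)}|\leq 1$, the elementary inequality $|\prod_j a_j-\prod_j b_j|\leq \sum_l |a_l-b_l|$ (valid for $a_j,b_j\in\interv$), combined with Cauchy--Schwarz, yields
\begin{equation*}
    \Big|\E\Big\langle\prod_j R^{(k_j)}\Big\rangle_0 - \prod_j \bar R^{(k_j)}\Big| \leq \sum_{l=1}^k \sqrt{\E\Big\langle\big(R^{(k_l)}-\bar R^{(k_l)}\big)^2\Big\rangle_0},
\end{equation*}
and the right-hand side vanishes as $N\to\infty$ by Theorem~\ref{thm:mo_original_conc}. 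Combining with $A-\E\langle\prod_j R^{(k_j)}\rangle_0=\bigo{1/N}$ yields $A-B\to 0$ in the monomial case, and then the full corollary by the reduction of the first paragraph.

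The main obstacle is not a deep step but a careful piece of bookkeeping in Step~2: one must notice that the single-replica expectation $A$ can be written as the expected product of multioverlaps all living on the \emph{same} replica, up to a $\bigo{1/N}$ spin-coincidence correction; crucially, this makes Theorem~\ref{thm:mo_original_conc} applicable, since the multioverlap concentration is stated for the joint quenched Gibbs measure. Once this translation is set up, the boundedness of the spins in $\interv$ makes the telescoping closure entirely routine.
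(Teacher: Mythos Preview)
Your proof is correct and takes a genuinely different route from the paper. The paper argues via Corollary~\ref{cor:asymp_spin_dist}: along any subsequence where the spin array converges, the Aldous--Hoover representation simplifies to one independent of $u$ and $(w_l)_l$, which forces the decoupling; a subsubsequence argument then promotes this to the full sequence. Your argument instead avoids Aldous--Hoover and subsequences entirely: you reduce to monomials by Weierstrass, rewrite $\E\langle\prod_j\sigma_j^{k_j}\rangle_0$ as $\E\langle\prod_j R^{(k_j)}\rangle_0+O(1/N)$ by exchangeability (all on a single replica), and close via telescoping plus Theorem~\ref{thm:mo_original_conc} for $n=1$ multioverlaps. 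This is more elementary and, in principle, quantitative: if one had rates in Theorem~\ref{thm:mo_original_conc}, your argument would inherit them, whereas the paper's compactness-and-subsequence approach is intrinsically non-quantitative. The paper's route, on the other hand, makes the structural reason for the decoupling transparent (the asymptotic Gibbs measure factorises over replicas and over the disorder), which is of independent interest. One small remark: your ``multilinearity'' justification is a slight abuse of language---the second term $\prod_j\E\langle h_j(\sigma_j)\rangle_0$ is not linear in each $h_j$---but the intended expansion over monomial tuples (both terms expand as the same finite linear combination indexed by $(m_1,\dots,m_k)$) is valid, so the reduction stands.
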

In the above statement, by spin permutation symmetry (hypothesis \eqref{eq:spin_symm}), we fix w.l.g. the spin indices to be the first $k$ ones; but this choice is arbitrary.


\section{Intermediate results for the perturbed model}\label{sec:intermediate}

Instead of working directly with the original model \eqref{eq:gibbs}, we will study a slightly \emph{perturbed} one. Only after proving strong replica symmetry for this perturbed model, we will extend the results to the original (possibly regularised) one. The objective of this perturbation is to make the system have ``good structural properties'' that ensure concentrations necessary for our results. While the perturbation present in this work is new, the general strategy is ubiquitous in statistical mechanics. As we will see later, if the scaling of the perturbation is chosen appropriately, it will ensure the convergences needed while not changing the asymptotic values of the multioverlaps. Thus, we will be able to show that the concentrations also hold for the original (unperturbed) model.

We will refer to this perturbation as the \emph{Poisson perturbation}. Its role will be to force some Franz-de Sanctis type inequalities \cite{de2009self}, which are generalisations of the Ghirlanda-Guerra identities in spin glasses \cite{ghirlanda1998general,panchenko2010ghirlanda}. These inequalities will be at the core of our proof of multioverlap concentration.

\paragraph{Poisson perturbation.} For every $p\geq0$, define the set $\mathcal{D}_p := \{ 2^{-k} \in [0,1] : k \in \mathbb{N}_{\ge p} \}$ as well as $\mathcal{I}_m := (\mathcal{D}_0 \times \mathcal{D}_1 \times \cdots \times \mathcal{D}_{m-1})$ for every $m\geq1$ (with $\times$ being the Cartesian product), and $\mathcal{I} := \bigcup_{m\ge 1} \mathcal{I}_m$ (a countable set). Let $\iota:\mathcal{I} \to \N$ be a function s.t. for $I :=( 2^{-i_0},\dots,2^{-i_{m-1}})\in\mathcal{I}$ it gives $\iota(I) = \sum_{p=0}^{m-1} i_p$. For each $I=(a_0,\dots,a_{m-1})\in\mathcal{I}$, consider the polynomial $P_I : \interv \to [0,1]$ given, for every $x\in\interv$, by
\begin{equation}\label{eq:def_poly}
    P_I(x):= 2^{-\iota(I)-2m} \sum_{p=0}^{m-1} a_p (x+1)^p\,.
\end{equation}
The definitions of $\mathcal{I}$ and the polynomials were chosen so that the expression that will give the Poisson perturbation is summable and (for every $I\in\dyadic$ and $x\in\interv$) $a_p (x+1)^p$ belongs to $[0, 1]$. Observe that all these polynomials are convex on the interval $\interv$. Furthermore, let $\pi := (\pi_{I})_{I\in\mathcal{I}}$ be a collection of i.i.d. Poisson random variables of mean $s_N$, $U := (U^I_j)_{j\ge 1,I\in\mathcal{I}}$ be a collection of i.i.d. uniform random variables in $[N]$, and $\lambda := (\lambda_{I})_{I\in\mathcal{I}}$ another collection of i.i.d. random variables uniform in $[1/2,1]$. All these random variables are independent of everything else. Here, similarly to the case of the Gaussian regularisation, $(s_N)_{N\geq1}$ is some sequence verifying
\begin{equation*}
    s_N \to +\infty \ \ \ \ \mbox{ and } \ \ \ \  1 \geq \frac{s_N}{N} \to 0\,.
\end{equation*}
Then, the Poisson perturbation of the Hamiltonian will be
\begin{equation}\label{eq:poiss_pert}
    \hamil^{\rm poiss}(\sigma\mid \pi,U,\lambda):= - t \sum_{I\in \mathcal{I}}  \lambda_I \sum_{j=1}^{\pi_I} P_I(\sigma_{U^I_j})\,,
\end{equation}
where $t\in[0,1]$ is a parameter that tunes the strength of the perturbation. Note that if $(\pi_{I,i})_{I\in\mathcal{I},i\in[N]}$ are i.i.d. random variables with Poisson distribution of mean $s_N/N$ (instead of $s_N$), so clearly $$\sum_{i=1}^{N} \pi_{I,i} P_I(\sigma_{i}) \eqdist \sum_{j=1}^{\pi_I} P_I(\sigma_{U^I_j}).$$ We can then define the perturbation alternatively as
\begin{equation}\label{eq:poiss_pert2}
     \hamil^{\rm poiss}(\sigma\mid \pi,U,\lambda):= - t \sum_{I\in \mathcal{I}}  \lambda_I \sum_{i=1}^{N} \pi_{I,i} P_I(\sigma_{i})\,.
\end{equation}
We will use these two alternative forms of the Poisson perturbation Hamiltonian interchangeably. The summability of \eqref{eq:poiss_pert} can be seen from the fact that (for $\delta < 1$) the tails of the convergent series $\sum_{I\in\mathcal{I}} 2^{-(1-\delta)\iota(I)}$ are eventually a.s. larger than those of the Poisson perturbation.

All together, \eqref{eq:gauss_pert} and \eqref{eq:poiss_pert} result in a random perturbed Hamiltonian given by 
\begin{equation}\label{pert_mode}
\hamil'(\sigma)=\hamil'(\sigma \mid J,\pi,U,\lambda):= \hamil(\sigma \mid J)+ \hamil^{\rm gauss}(\sigma) + \hamil^{\rm poiss}(\sigma\mid \pi,U,\lambda)\,,
\end{equation}
where, when needed, we explicitly emphasize the dependence of this Hamiltonian on the disorder $(J,\pi,U)$ and the perturbation parameters $\lambda$. Note that it remains true for the random perturbed Hamiltonian that for every permutation of spin indices $P: [N] \to [N]$,
\begin{equation}
    \hamil'(\sigma_1,\dots,\sigma_N\mid J,\pi,U,\lambda) \eqdist \hamil'(\sigma_{P(1)},\dots,\sigma_{P(N)}\mid J,\pi,U,\lambda)\,.\label{exch_pert}
\end{equation}

The Gibbs measure of the perturbed model will be defined in an analogous way to the unperturbed one. That is, the spin system in the perturbed model will be distributed, conditional on the disorder $(J,\pi,U)$ and the regularisation $\lambda$, according to the Borel probability measure $\proba{(\cdot\mid J,\pi,U)}$ on the set $\Sigma_N$ defined by the probability density 
\begin{align}\label{eq:gibbs_pert}
    G'_N(\sigma \mid J,\pi,U,\lambda) := \frac{\exp{\hamil'(\sigma \mid J,\pi,U,\lambda)}}{\partf'(J,\pi,U,\lambda)}\,, \ \ \partf'(J,\pi,U,\lambda)=\int_{\Sigma_N}d\sigma \exp{\hamil'(\sigma \mid J,\pi,U,\lambda)}\,.
\end{align}

The thermal mean for this measure is also defined in an analogous way. Whenever we want to explicitly emphasise the dependence of this mean on the strength parameter $t$, we will write $\Etherm{\, \cdot \,}_t$. When not done, it is implicitly assumed that the $t$ is taken to be $1$: $\Etherm{\, \cdot \,}=\Etherm{\, \cdot \,}_{t=1}$ Also, note that $\Etherm{\, \cdot \,}_0 = \Etherm{\, \cdot \,}_{t=0}$ (i.e., the Gibbs measure for $t=0$ is equal to the measure without the Poisson perturbation). From now on, the expectation $\E(\cdot)$ will be taken to be over the extended disorder $(J,\pi,U)$. For the expectation over the regularisation $\lambda$ we will always write $\E_\lambda(\cdot)$ explicitly.

\paragraph{Poisson perturbed free entropy variance.} The free entropy of the perturbed model is
\begin{equation}\label{Free_en_pert}
    {F'_{N}}  := {\ln  \int_{\Sigma_N} d\sigma\exp{\hamil'(\sigma \mid J,\pi,U,\lambda)}}\,.
\end{equation}
Note that the free entropy defined in this way is a function of the regularisations $\lambda$ and $\eps_N$ (in addition of $(J,\pi,U)$). It is easy to see that, because the regularisations added to the Hamiltonian are of order $o(N)$, the free energies of the perturbed and unperturbed systems, both of order $N$, are equal up to a lower order $o(N)$ correction (see for example \cite{barbier2020strong}). Finally define 
\begin{equation}\label{eq:def_v_N_pois}
    v'_N := \sup_{\lambda \in [1/2,1]^\N} \E\big[ (F'_N - \E_J F'_N )^2 \big]\,,
\end{equation}
where $\E_J( \cdot )$ is the expectation only with respect to the disorder $J$ (i.e., the variables $\pi$ are fixed). For these intermediate results, we will assume that $v'_N$ is $\bigo{N}$.

\paragraph{Intermediate results.} The main intermediate result concerns the full concentration of multioverlaps.

\begin{proposition}[Strong replica symmetry for the perturbed model]\label{prop:mo_conc}
    Suppose that the Hessian of $\hamil$ exists and is a negative semi-definite matrix for every $\sigma\in\Sigma_N$, that the exchangeability of spins \eqref{eq:spin_symm} holds, and that there exists a sequence $(s_N)_{N\geq1}$ s.t. $s_N \to +\infty$, $s_N/N \to 0_+$, and for which $(v'_N)^{1/2}/s_N\to 0_+$. Then, for all $n\geq 1$ and $k \in K_n$, we have that
    \begin{equation*}
       \Elam{\EThdis{\big(\noverlap - \mathbb{E}\big\langle \noverlap\big\rangle\big)^2}} \limN 0\,.
    \end{equation*}
\end{proposition}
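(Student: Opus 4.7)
The plan is to split the full variance as
\[
\EThdis{(\noverlap-\Ethdis{\noverlap})^2}=\EThdis{(\noverlap-\Etherm{\noverlap})^2}+\Edis{(\Etherm{\noverlap}-\Ethdis{\noverlap})^2}
\]
and, after taking $\E_\lambda$, to show separately that the thermal piece and the disorder piece both vanish. For the thermal term I would simply invoke Theorem~\ref{prop:mo_thm_conc}: each $P_I$ is convex on $\interv$ and enters $\hamil^{\rm poiss}$ with a negative prefactor, so $\hamil'$ remains concave with Hessian $\preceq -\eps_N\id$, and Brascamp--Lieb gives a bound of order $\|k\|^2/(N\eps_N)\to 0$ uniformly in $\lambda$.

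The bulk of the work is the disorder piece, which I would extract from the Poisson perturbation. The starting observation is that $F_N'(\lambda)$ is a convex function of $\lambda$, since $\hamil^{\rm poiss}$ is affine in each $\lambda_I$ and so $F_N'$ is a log-Laplace transform. The hypothesis $(v_N')^{1/2}/s_N\to 0_+$ then means that $s_N^{-1} F_N'$ concentrates in $L^2$ around $s_N^{-1}\E_J F_N'$. A standard convex-analytic lemma turns $L^2$ concentration of a sequence of convex functions into pointwise concentration of their gradients on a dense set of $\lambda$-values; applied here, this gives concentration of
\[
\frac{1}{s_N}\partial_{\lambda_I}F_N' = -\frac{t}{s_N}\sum_{i=1}^N \pi_{I,i}\,\Etherm{P_I(\sigma_i)}
\]
for a.e.\ $\lambda_I\in[1/2,1]$. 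Next I would combine this with the Poisson integration-by-parts identity $\E[\pi_{I,i}f(\pi_{I,i})] = (s_N/N)\,\E[f(\pi_{I,i}+1)]$ and the spin-exchangeability~\eqref{exch_pert}, running the standard cavity manipulation to obtain Franz--de Sanctis--type decoupling identities of the form $\Ethdis{P_I(\sigma^1)\, g}-\Ethdis{P_I(\sigma^1)}\Ethdis{g}\to 0$, for $g$ a generic function of several replicas. Because the collection $\{P_I\}_{I\in\dyadic}$ spans a dense family of polynomials on $\interv$ (after the rescalings in~\eqref{eq:def_poly}) and the right-hand side of~\eqref{eq:poiss_pert2} already averages over sites, these identities can be specialised, via linear combinations and spin-exchangeability, to the inequalities $\Ethdis{(\noverlap)^2}-(\Ethdis{\noverlap})^2\to 0$ for every $k\in K$. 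Combined with the thermal concentration from step one, this delivers the disorder part and hence the full claim.

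The step I expect to be the main obstacle is the transfer from $L^2$ concentration of $F_N'(\lambda)$ to concentration of the gradient $\partial_{\lambda_I}F_N'$ in a form that is uniform in $I\in\dyadic$ and robust under the $\E_\lambda$ average over the countable product $[1/2,1]^\N$. The geometric decay $2^{-\iota(I)}$ hard-wired into~\eqref{eq:def_poly} and the summability of the Poisson perturbation have to be used quantitatively, so that the resulting infinite family of Franz--de Sanctis identities can be summed and then unpacked to pin down every multioverlap. Once that quantitative convex-gradient estimate is in hand, the rest should be largely algebraic bookkeeping, with the log-concavity hypothesis entering only through Theorem~\ref{prop:mo_thm_conc} used to close the bootstrap on the thermal side.
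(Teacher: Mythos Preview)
Your overall architecture (thermal/disorder split; thermal part via Brascamp--Lieb; disorder part via convexity of $F_N'$ in $\lambda$, energy concentration, and Franz--de Sanctis type identities coming from the Poisson perturbation) matches the paper. But the step you label ``largely algebraic bookkeeping'' is exactly the place where the real argument lives, and your sketch does not supply it.

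Two concrete gaps. First, the Franz--de Sanctis identities that the Poisson perturbation actually produces are not of the clean form $\Ethdis{P_I(\sigma^1)g}-\Ethdis{P_I(\sigma^1)}\Ethdis{g}\to 0$. Because adding one more Poisson point tilts the Gibbs measure, what you get (Theorem~\ref{thm:FdS_ineq} in the paper) involves the exponential weights $e^{-\lambda_I\theta_{I,1}^l}$ in both numerator and denominator. The paper then needs a further geometric-series expansion (Lemma~\ref{lem:decouplig_lemma}) just to extract a usable statement, namely that a specific ratio $Y_{\lambda,I}(u)$ has asymptotically vanishing variance. This is a single quadratic identity per $I$, not a general bilinear decoupling, so you cannot simply ``linearly combine'' them into $\Ethdis{(\noverlap)^2}-(\Ethdis{\noverlap})^2\to 0$.

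Second, and more seriously, the passage from this family of single-site identities to concentration of all multioverlaps is not bookkeeping. The paper argues by contradiction, passes to a subsequential Aldous--Hoover limit $\sigma(u,v,w,x)$, and \emph{uses the thermal concentration inside the disorder argument} to drop the $w$-dependence (Corollary~\ref{cor:thm_pure_state}). Only then can the identity ``$Y_{\lambda,I}(u)$ is a.s.\ constant in $u$'' be exploited: one uses analyticity in the polynomial coefficients $(\bar a_p)$ of $P_I$ to differentiate and evaluate at $0$, and then runs a lexicographic induction over $k\in K$ to peel off $R^{(k)}_\infty$ from products of lower-order multioverlaps. None of this is visible in your proposal, and the density of $\{P_I\}$ alone does not give it: density lets you vary the polynomial, but the identity you have is nonlinear in $P_I$ (through the exponential tilt), so extracting individual multioverlaps requires precisely this differentiation-plus-induction mechanism.

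Finally, the obstacle you flag --- uniformity in $I\in\dyadic$ --- is the easy part: the normalisation $2^{-\iota(I)-2m}$ in \eqref{eq:def_poly} makes every $P_I$ take values in $[0,1]$, so the bound in Theorem~\ref{thm:FdS_ineq} is automatically uniform in $I$. The genuine obstacle is the one you dismissed.
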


Compared to the (stronger) Theorem~\ref{thm:mo_original_conc}, the above result is valid in average over the perturbation parameters $\lambda$. This disorder concentration can then be extended to the original model by means of the following proposition.

\begin{proposition}[Asymptotic independence of multioverlaps on the perturbation]\label{prop:thm_mean_dif}
    Under the hypotheses of Theorem \ref{prop:mo_conc}, for every $k\in K$ there exists a constant $C(k) > 0$ s.t.
    \begin{equation*}
        \E \Big| \Etherm{R^{(k)}} - \Etherm{R^{(k)}}_0 \Big| \leq C(k) \Big(\frac{s_N}{N}\Big)^{1/6}(1+o_N(1))\,.
    \end{equation*}
\end{proposition}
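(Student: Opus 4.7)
The plan is to interpolate between the perturbed ($t=1$) and unperturbed-by-Poisson ($t=0$) Gibbs measures via the parameter $t \in [0,1]$ appearing in \eqref{eq:poiss_pert}. Writing $Y(\sigma) := \sum_{I \in \dyadic}\lambda_I \sum_{j=1}^{\pi_I} P_I(\sigma_{U_j^I})$ so that $\hamil^{\rm poiss} = -tY$, differentiation in $t$ yields $(d/dt)\Etherm{R^{(k)}}_t = -\mathrm{Cov}_t(R^{(k)}, Y)$. By the fundamental theorem of calculus, the triangle inequality, and two applications of Cauchy-Schwarz (first in the Gibbs measure, then under $\E$):
\begin{equation*}
    \E\big|\Etherm{R^{(k)}} - \Etherm{R^{(k)}}_0\big| \leq \int_0^1 \sqrt{\E\Var_t R^{(k)}}\,\sqrt{\E\Var_t Y}\,dt\,.
\end{equation*}

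Both thermal variances are controlled by the Brascamp-Lieb inequality, which applies because the perturbed Hamiltonian $\hamil' = \hamil + \hamil^{\rm gauss} + \hamil^{\rm poiss}$ has Hessian $\preceq -\eps_N \mathbb{I}$ at every $t\in[0,1]$: each $P_I$ is convex on $\interv$ so $-tP_I$ is concave, $\hamil$ is concave by hypothesis, and $\hamil^{\rm gauss}$ contributes the strong-concavity bound. The analogue of Theorem~\ref{prop:mo_thm_conc} for the perturbed measure gives $\E\Var_t R^{(k)} \leq \|k\|^2/(N\eps_N)$. For $Y$, using the alternative form \eqref{eq:poiss_pert2}, Brascamp-Lieb delivers $\Var_t Y \leq \eps_N^{-1}\Etherm{|\nabla_\sigma Y|^2}_t$ with $\nabla_{\sigma_i} Y = \sum_I \lambda_I \pi_{I,i} P_I'(\sigma_i)$. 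The deterministic bound $\|P_I'\|_\infty \leq C_I' := m^2\,2^{-\iota(I)-2m}$ (with $I\in\dyadic_m$), a counting argument on $\dyadic_m$ showing $\sum_{I\in\dyadic} C_I' < \infty$, a weighted Cauchy-Schwarz on the sum over $I$, and the Poisson moment identity $\E\pi_{I,i}^2 = s_N/N + (s_N/N)^2$ together yield
\begin{equation*}
    \E|\nabla_\sigma Y|^2 \leq \Big(\sum_{I \in \dyadic}C_I'\Big)^{\!2} s_N\,(1+o_N(1)) \eqdef C_1\, s_N\,(1+o_N(1))\,.
\end{equation*}

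Combining these estimates gives
\begin{equation*}
    \E\big|\Etherm{R^{(k)}} - \Etherm{R^{(k)}}_0\big| \leq \|k\|\sqrt{C_1}\,\sqrt{\frac{s_N}{N\eps_N^2}}\,(1+o_N(1))\,,
\end{equation*}
and optimising over $\eps_N$ by taking $\eps_N := (s_N/N)^{1/3}$ (which satisfies the required $\eps_N \to 0_+$ and $N\eps_N = N^{2/3}s_N^{1/3} \to +\infty$) produces the claimed rate $C(k)(s_N/N)^{1/6}(1+o_N(1))$ with $C(k) := \|k\|\sqrt{C_1}$. The main obstacle is the explicit control of $\E|\nabla_\sigma Y|^2$: the sum defining $Y$ runs over the infinite family $\dyadic$, so one must carefully verify the summability of $(C_I')_{I\in\dyadic}$ using the combinatorial structure of $\dyadic_m$ together with the rapid decay $2^{-\iota(I)-2m}$ baked into \eqref{eq:def_poly}; moreover the Poisson structure must be leveraged so that $\E\sum_i \pi_{I,i}^2$ is dominated by its linear contribution $s_N$ (from the diagonal Poisson variance $s_N/N$ at each site) rather than by $s_N^2/N$, and this linear scaling is precisely what produces the final exponent $1/6$ after the balancing in $\eps_N$.
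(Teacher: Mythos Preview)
Your proof is correct and follows essentially the same route as the paper: interpolate in $t$, express the derivative as a Gibbs covariance, bound it by Cauchy--Schwarz, control both thermal variances via Brascamp--Lieb (using Theorem~\ref{prop:mo_thm_conc} for $R^{(k)}$ and a direct gradient bound for the perturbation), and finally balance by choosing $\eps_N = (s_N/N)^{1/3}$. One small slip: since $R^{(k)}$ lives on the $n$-replica product space, the derivative is $-\sum_{l=1}^n \mathrm{Cov}_t\big(R^{(k)}, Y(\sigma^l)\big)$ rather than a single covariance, which inserts a factor $n$ (or $\sqrt{n}$ after Cauchy--Schwarz) into the constant $C(k)$; this is harmless because $n$ is determined by $k\in K_n$ and is absorbed into $C(k)$, but your displayed formula $(d/dt)\Etherm{R^{(k)}}_t = -\mathrm{Cov}_t(R^{(k)},Y)$ should be read accordingly.
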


Because these results are established under the slightly more relaxed condition of having control over $v'_N$ (instead of $v_N)$, the main results of Section \ref{sec:settings} also hold when replacing $v_N$ by $v'_N$ in their statements.

\section{Technical background}\label{sec:technical}

In this section we will present a brief summary of the results on log-concave distributions and the Aldous-Hoover representation that are needed for the proofs. 

\subsection{Log-concave distributions.} 
We begin by presenting the broader notion of log-concave measure.

\begin{definition}[Log-concave measure]
    A Borel probability measure $\mathbb{P}{(\cdot)}$ on some convex set $C \subseteq \R^N$ is said to be \emph{log-concave} if for all non-empty measurable sets $A,B\in C$ and for all $0 < t <1$,
    $$\mathbb{P}{(t A + (1-t) B)} \geq \mathbb{P}{(A)}^t \mathbb{P}{(B)}^{(1-t)}\,.$$
\end{definition}

An important property of log-concave measures is that they have sub-exponential tails \cite{borell1984convexity}. Another fundamental result is that they obey Paouris' inequality \cite{Paouris,adamczak2014short} that relates strong with weak moments. Within this class of measures, there are the ones defined by log-concave densities. A thorough review on log-concave measures may be found in \cite{saumard2014log}.

\begin{definition}[Log-concave density]
    An absolutely continuous probability measure $\mathbb{P}(\cdot)$ on some convex set $C \subseteq \R^N$ is said to have a \emph{log-concave density} $f(\cdot)$ if $f = e^{\phi}$, for some $\phi: \R^N \to \R$ concave.
\end{definition}
Under some regularity conditions for $\phi(\cdot)$, Brascamp-Lieb's inequality \cite{brascamp2002extensions} bounds the variance of functions of random vectors sampled according to the log-concave distribution induced by $\phi(\cdot)$.

\begin{theorem}[Brascamp-Lieb's inequality]\label{thm:brascamp}
    Let $A \in \R^N$ be a convex set and $\phi \in \mathcal{C}^2(A)$ be a strictly concave function. Then if the random variable $X$ is distributed according to the log-concave probability measure induced by $\phi(\cdot)$ and $g \in \mathcal{C}^1(A)$, we have that
    \begin{equation*}
        \Var{ g(X) } \leq - \E{ [\nabla g(X)^\intercal H^{-1}[\phi](X) \nabla g(X)] }\,.
    \end{equation*}
\end{theorem}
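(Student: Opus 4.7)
The approach I would take is the H\"ormander $L^2$ (or Bakry-\'Emery) method: relate the variance of $g$ to the Dirichlet form of a solution of a Poisson equation associated with the Langevin generator of $\mu$, and then use the commutator between the gradient and this generator to bring in the Hessian of $\phi$. Set $\mu(dx) := Z^{-1}e^{\phi(x)}\, dx$ on $A$ and consider the generator $L := \Delta + \nabla\phi \cdot \nabla$, which is symmetric on $L^2(\mu)$ with the integration-by-parts identity $\int f(-Lh)\,d\mu = \int \nabla f \cdot \nabla h\, d\mu$. Strict concavity of $\phi$ makes $-L$ strictly positive on mean-zero functions, so one can solve the Poisson equation $-Lu = g - \E g$ with $u$ smooth and mean-zero (here and below $\E$ denotes expectation under $\mu$). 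Integration by parts then gives the basic representation $\Var(g(X)) = \E[(g - \E g)(-Lu)] = \E[\nabla g \cdot \nabla u]$.

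The key algebraic input is the commutator identity $[\partial_i, L] = \sum_j (\partial^2_{ij}\phi)\,\partial_j$, equivalently the componentwise relation $\partial_i(Lu) = L(\partial_i u) + \sum_j (\partial^2_{ij}\phi)(\partial_j u)$. Differentiating the Poisson equation in each coordinate and packaging the result as a vector equation on $\nabla u$ yields $(-L + B)\,\nabla u = \nabla g$ with $B := -H[\phi] \succeq 0$ by concavity, and pointwise positive definite by strict concavity. Taking the $L^2(\mu)$ inner product of this vector equation with $\nabla u$, the term $\E[\nabla u^\intercal (-L\nabla u)]$ equals $\sum_i \E[|\nabla \partial_i u|^2] \ge 0$ by the integration-by-parts identity applied componentwise, so
$$\E[\nabla u^\intercal B\, \nabla u] \le \E[\nabla g \cdot \nabla u] = \Var(g).$$
Cauchy-Schwarz in the $B$-inner product now gives
$$\Var(g) = \E[\nabla g \cdot \nabla u] \le \big(\E[\nabla g^\intercal B^{-1}\nabla g]\big)^{1/2}\big(\E[\nabla u^\intercal B\, \nabla u]\big)^{1/2} \le \big(\E[\nabla g^\intercal B^{-1}\nabla g]\big)^{1/2}\,\Var(g)^{1/2},$$
and squaring and dividing by $\Var(g)^{1/2}$ produces $\Var(g) \le \E[\nabla g^\intercal B^{-1}\nabla g] = -\E[\nabla g^\intercal H[\phi]^{-1}\nabla g]$, which is exactly the claim.

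The main obstacle is the analytic step: solvability of $-Lu = g - \E g$ in a class where $u$ and $\nabla u$ are in $L^2(\mu)$, and legitimacy of the integrations by parts when $A$ has a boundary or when $\phi$ is merely $\mathcal{C}^2$ without growth control. The standard workaround is a regularization by $\phi_\eta := \phi - \eta\|x\|^2/2$ on bounded exhaustions of $A$ (or on a suitable extension to $\R^N$), which turns $-L$ into a uniformly elliptic operator with coercive drift, so classical elliptic theory provides a smooth $u_\eta$ with $\nabla u_\eta \in L^2(\mu_\eta)$ and makes all boundary contributions in the integrations by parts vanish. Passing $\eta \to 0$ uses lower semicontinuity of the variance on the left and the monotone convergence of the right-hand side $-\E[\nabla g^\intercal (H[\phi] - \eta I)^{-1}\nabla g] \to -\E[\nabla g^\intercal H[\phi]^{-1}\nabla g]$ (the integrand is monotone in $\eta$ on the positive cone). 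Beyond this approximation step, the entire argument is purely algebraic and depends only on the concavity of $\phi$ and the commutator identity.
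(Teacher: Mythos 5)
The paper does not prove this statement: Theorem~\ref{thm:brascamp} is cited from the literature (\cite{brascamp2002extensions}; see also the review~\cite{saumard2014log}), so there is no in-paper argument against which to compare. Your proposal is a correct self-contained derivation via the H\"ormander $L^2$ / Helffer--Sj\"ostrand method: the commutator identity $\partial_i(Lu) = L(\partial_i u) + \sum_j(\partial^2_{ij}\phi)\,\partial_j u$ is right; the vector Poisson equation $(-L + B)\nabla u = \nabla g$ with $B = -H[\phi] \succ 0$ follows; the Bochner-type term $\E[\nabla u^\intercal(-L\nabla u)] = \E\big[\|\nabla^2 u\|^2_{\mathrm{HS}}\big] \ge 0$ is handled correctly via componentwise integration by parts; and the $B$-weighted Cauchy--Schwarz closes the argument (you should note the trivial case $\Var(g)=0$ before dividing by $\Var(g)^{1/2}$, but that is immediate). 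You also correctly identify the real technical obstruction (solvability of the Poisson equation in the right space, vanishing of boundary terms) and the standard regularization workaround; two small points are worth making explicit: passing from $\phi_\eta$ to $\phi$ also changes the reference measure $\mu_\eta$, so the limit $\eta\to 0$ needs a mild dominated-convergence argument in addition to monotonicity of the quadratic form $\nabla g^\intercal(B+\eta I)^{-1}\nabla g \uparrow \nabla g^\intercal B^{-1}\nabla g$; and $g\in\mathcal{C}^1(A)$ is borderline for differentiating the Poisson equation (you need $u\in\mathcal{C}^3$), which is repaired by first smoothing $g$ and passing to the limit. For context, this route is genuinely different from Brascamp and Lieb's original argument, which proceeds by a Pr\'ekopa--Leindler-type deformation and induction rather than through the Langevin generator; both are standard proofs and yield the same inequality.
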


From this fundamental inequality we get the following corollary.

\begin{corollary}[Simplified Brascamp-Lieb's inequality]\label{cor:brascampstrong}
    Under the hypotheses of Theorem \ref{thm:brascamp} and if furthermore the Hessian $H[\phi]$ is uniformly upper bounded in $A$ by $- \eps \mathbb{I}$ for some $\eps > 0$, then 
    \begin{equation*}
        \Var{ g(X) } \leq \frac{1}{\eps} \E{ \norm{\nabla g(X)}^2 }.
    \end{equation*}
\end{corollary}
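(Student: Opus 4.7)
The plan is to derive this directly from Theorem \ref{thm:brascamp} using a single Loewner-order manipulation of the Hessian. The hypothesis says $H[\phi](x) \preceq -\varepsilon \mathbb{I}$ for all $x \in A$. Since $-H[\phi](x)$ is then symmetric positive definite with all eigenvalues at least $\varepsilon > 0$, it is invertible at every point, and its inverse satisfies $(-H[\phi](x))^{-1} \preceq \varepsilon^{-1}\mathbb{I}$. Equivalently, $-H^{-1}[\phi](x) \preceq \varepsilon^{-1}\mathbb{I}$ in the Loewner order.

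From here I would simply plug this into the conclusion of Theorem \ref{thm:brascamp}. For any vector $v \in \mathbb{R}^N$, the Loewner bound yields the scalar inequality $-v^{\intercal} H^{-1}[\phi](x) v \leq \varepsilon^{-1}\|v\|^2$. Applying this pointwise with $v = \nabla g(X)$ and then taking expectations gives
\begin{equation*}
    \operatorname{Var}(g(X)) \leq -\mathbb{E}\bigl[\nabla g(X)^{\intercal} H^{-1}[\phi](X)\, \nabla g(X)\bigr] \leq \frac{1}{\varepsilon}\,\mathbb{E}\bigl[\|\nabla g(X)\|^2\bigr],
\end{equation*}
which is the claim. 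The only auxiliary observation needed is that the strict concavity required to apply Theorem \ref{thm:brascamp} is implied by the uniform bound $H[\phi] \preceq -\varepsilon\mathbb{I}$ with $\varepsilon > 0$, so the hypotheses of the parent theorem are indeed met.

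There is no serious obstacle here: the argument is a one-line consequence of the operator-monotonicity of the map $M \mapsto -M^{-1}$ on the cone of negative definite matrices, combined with the Brascamp-Lieb bound. The only thing to be mildly careful about is the direction of the Loewner inequality after inversion (inverting reverses the order on positive definite matrices), which is exactly what turns the lower bound $-H[\phi] \succeq \varepsilon \mathbb{I}$ into the upper bound $(-H[\phi])^{-1} \preceq \varepsilon^{-1}\mathbb{I}$ used above.
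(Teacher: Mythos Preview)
Your argument is correct and is exactly the intended derivation; the paper itself does not spell out a proof but presents the statement as an immediate corollary of Theorem~\ref{thm:brascamp}, which is precisely what you obtain via the Loewner-order bound $-H^{-1}[\phi]\preceq \varepsilon^{-1}\mathbb{I}$.
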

Here we understand that a linear operator $A$ of $\R^N$ upper bounds another one $B$ if $A-B$ is a non-negative linear operator.


\subsection{Aldous-Hoover representation.}\label{SecAHsoft1}
Here we will give the necessary background on a convenient and powerful representation of the asymptotic spin distribution that will be used along the proofs. 

\begin{definition}
    Let $(X_{ij})_{i,j\geq1}$ be an array of real-valued random variables. This array will be said to be \emph{separately exchangeable} if for every pair of bijections $P,P':\N \to \N$ we have that
    \begin{equation*}
        X_{i,j} \eqdist X_{P(i),P'(j)}\,.
    \end{equation*}
\end{definition}

The Aldous-Hoover theorem, proven independently in \cite{aldous1981representations} and \cite{hoover1982row}, gives a necessary and sufficient condition for a $2$-index array of random variables to be separately exchangeable. In particular, it gives a useful representation for working with this kind of arrays.

\begin{theorem}[Aldous-Hoover representation]\label{thm:AHrep}
    An array of real-valued random variables $(X_{i,j})_{i,j\geq1}$ is separately exchangeable if and only if for every $i,j\geq1$
    \begin{equation*}
        X_{i,j} \eqdist f(u,v_i,w_j,x_{i,j})\,,
    \end{equation*}
    for some measurable function $f:[0,1]^4 \to \R$ and (for every $i,j\geq1$) $u,v_i,w_j,x_{i,j}$ are i.i.d. uniform random variables on $[0,1]$.
\end{theorem}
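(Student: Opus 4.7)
The plan is to treat the two implications separately. The forward direction (representation implies separate exchangeability) is a direct computation; the reverse direction is the substantive content and is where I would invest the main effort, following the approach introduced by Aldous and Hoover.

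For the easy direction, suppose $X_{i,j} \eqdist f(u, v_i, w_j, x_{i,j})$ with $u$, $(v_i)$, $(w_j)$, $(x_{i,j})$ as in the statement. Given bijections $P, P': \N \to \N$, the reindexed families $(v_{P(i)})_{i\ge 1}$, $(w_{P'(j)})_{j\ge 1}$ and $(x_{P(i),P'(j)})_{i,j\ge 1}$ are each i.i.d.\ and mutually independent (since permuting an i.i.d.\ family preserves joint law). Therefore the joint law of $(v_{P(i)}, w_{P'(j)}, x_{P(i),P'(j)})_{i,j}$ coincides with that of $(v_i, w_j, x_{i,j})_{i,j}$, and applying $f$ gives $(X_{P(i),P'(j)})_{i,j} \eqdist (X_{i,j})_{i,j}$.

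For the hard direction, the strategy is to extract, in order, four independent ``layers'' of randomness. First, for each fixed row $i$, the sequence $(X_{i,j})_{j\ge 1}$ is exchangeable in $j$, so by de Finetti's theorem there exists a random probability measure $\mu_i$ such that, conditionally on $\mu_i$, the row entries are i.i.d.\ of law $\mu_i$. Next, the separate exchangeability of the array in the row index implies that $(\mu_i)_{i\ge 1}$ itself is exchangeable, so a second application of de Finetti yields a random measure $\nu$ with $(\mu_i) \mid \nu$ i.i.d.\ of law $\nu$. Encoding $\nu$ through a $\mathrm{Unif}[0,1]$ variable $u$ and each $\mu_i$ through a $\mathrm{Unif}[0,1]$ variable $v_i$ (conditionally independent given $u$) captures a ``shell'' and a ``row'' layer of randomness. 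The crucial additional ingredient is column exchangeability, which I would use to show that the directing kernel governing the entries given $(u, v_i)$ itself admits a further de Finetti-type decomposition across $j$: there is a ``column'' layer $w_j$ (a $\mathrm{Unif}[0,1]$ variable, i.i.d.\ across $j$ given $u$) such that given $(u, v_i, w_j)$ the entries are mutually independent. The residual randomness in each cell, conditional on $(u, v_i, w_j)$, is then a single real random variable that can be coupled to a $\mathrm{Unif}[0,1]$ variable $x_{i,j}$, all jointly independent, via the standard inverse-CDF construction. Combining these layers into a single measurable $f: [0,1]^4 \to \R$ yields the claimed representation.

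The main obstacle is step three: simultaneously respecting row and column exchangeability when peeling off the mixing measures. Applying de Finetti naively to rows, then to the directing measures, does not by itself force the column structure to split off as an i.i.d.\ family of $w_j$'s; one must verify that the conditional law of $(X_{i,j})_{i,j}$ given $(u, (v_i))$ retains exchangeability in $j$ and is in fact a product across $j$ of a single exchangeable-in-$i$ kernel. Making this measurable extraction rigorous (equivalently, identifying the appropriate tail/shell $\sigma$-algebras and invoking conditional de Finetti in the right order) is the delicate step, and is precisely the content of Aldous's and Hoover's original arguments; I would defer to those constructions rather than attempt to reproduce them in full.
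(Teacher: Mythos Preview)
The paper does not prove this theorem at all: it is stated in Section~\ref{sec:technical} as known background and attributed to \cite{aldous1981representations} and \cite{hoover1982row}. So there is no ``paper's own proof'' to compare against; your proposal goes well beyond what the paper does, which is simply to quote the result.

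As for the sketch itself, the easy direction is fine. For the hard direction, your outline captures the right intuition (iterated de~Finetti plus a coding lemma), but the step you flag as the obstacle really is the whole theorem: once you condition on the row-directing measures, the column structure does \emph{not} automatically split off as an i.i.d.\ family of $w_j$'s, and the way Aldous and Hoover actually extract the four layers is not by sequential de~Finetti but by identifying the correct invariant/tail $\sigma$-fields and using a coding argument for conditionally independent arrays. Your last sentence essentially concedes this by deferring to the original constructions, which is appropriate here; just be aware that the preceding paragraphs do not constitute a proof so much as a heuristic for why the representation is plausible. Since the paper only cites the result, citing \cite{aldous1981representations,hoover1982row} (or a textbook treatment such as Kallenberg) is exactly what is expected.
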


Note that because the spins are bounded, each individual spin defines a tight sequence of random variables (w.r.t. the sequence index given by the size of the system). This means that for every $i,l\geq1$, there exists some subsequence of system sizes $(N^{(i,l)}_j)_{j\geq1}$ such that along this subsequence $\sigma_i^l$ converges weakly to $\gamma_i^l$ as $j\to+\infty$, for some random variable $\gamma^l_i$ taking values in $\interv$. By a diagonal argument, one may then take a common subsequence of system sizes $(N_j)_{j\ge 1}$ such that for every spin index $i\geq1$ and replica index $l\geq1$, it holds that $\sigma_i^l$ converges to $\gamma_i^l$.

By property \eqref{exch_pert} of the perturbed Hamiltonian and because replica indices are exchangeable, we have that the array of random variables $(\gamma_i^l)_{i,l\geq1}$ is separately exchangeable. Then, by Theorem~\ref{thm:AHrep} we know that there exists some measurable function $\sigma:[0,1]^4 \to \interv$ such that along this subsequence, for every $i,l\geq1$, we have that
\begin{equation*}
    \sigma_i^l \convdist \sigma(u,v_i,w_l,x_{i,l})\,,
\end{equation*}
where (for every $i,l\geq1$) $u,v_i,w_l,x_{i,l}$ are i.i.d. uniform random variables on $[0,1]$. Let us also define, for every $k\geq 1$, the associated functions $\bar \sigma^{(k)}: [0,1]^3 \to \interv$, that are the asymptotic equivalent of ``generalised magnetisations'', according to 
\begin{equation}
    \bar \sigma^{(k)}(u,v,w):= \int_0^1 \sigma^k(u,v,w,x) dx\,.    
\end{equation}

\begin{lemma}[Asymptotic multioverlap representation]\label{lem:mo_limit1}
    Along the subsequence $(N_j)_{j\ge 1}$ defined above, we have that for every $n\geq1$ and $k \in K_n$,
    \begin{equation*}
        \noverlap \convdist R^{(k)}_\infty(u,w_1,\dots,w_n) := \int_0^1 \prod_{l=1}^n \bar{\sigma}^{(k_l)}(u,v,w_l) \, dv\,.
    \end{equation*}
\end{lemma}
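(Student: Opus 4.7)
The plan is to prove convergence in distribution via the method of moments. Since $|\noverlap| \leq 1$ and $|R^{(k)}_\infty| \leq 1$ almost surely, showing that all integer moments $\E[(\noverlap)^m]$ converge to $\E[R^{(k)}_\infty(u,w_1,\dots,w_n)^m]$ along $(N_j)$ will suffice.

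First I would expand, for any $m \geq 1$,
\[
\E[(\noverlap)^m] = \frac{1}{N^m} \sum_{i_1,\dots,i_m \in [N]} \E\Big[\prod_{a=1}^m \prod_{l=1}^n (\sigma_{i_a}^l)^{k_l}\Big].
\]
There are $N(N-1)\cdots(N-m+1) = N^m(1 + \mathcal{O}(1/N))$ tuples with distinct entries, and the remaining tuples contribute at most $\mathcal{O}(1/N)$ since each summand is bounded by $1$. Invoking the spin exchangeability \eqref{eq:spin_symm} (which, together with the permutation invariance of $\sigma_i^l$ under the Gibbs measure induced by $\mathcal{H}$, makes every distinct-index expectation equal), one obtains
\[
\E[(\noverlap)^m] = \E\Big[\prod_{a=1}^m \prod_{l=1}^n (\sigma_{a}^l)^{k_l}\Big] + \mathcal{O}(1/N).
\]
By the joint convergence along $(N_j)$ of the finite-dimensional marginals of $(\sigma_i^l)$ to those of $(\sigma(u, v_i, w_l, x_{i,l}))$ furnished by Theorem~\ref{thm:AHrep}, and by the bounded continuous nature of the monomial involved, the first term converges to
\[
\E\Big[\prod_{a=1}^m \prod_{l=1}^n \sigma(u, v_a, w_l, x_{a,l})^{k_l}\Big].
\]

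The remaining step identifies this limit as $\E[R^{(k)}_\infty(u,w_1,\dots,w_n)^m]$ by iterated conditioning on the Aldous--Hoover randomness. Conditional on $(u, (w_l)_{l=1}^n)$, the tuples $(v_a, x_{a,1},\dots,x_{a,n})$ are i.i.d. across $a$; conditional on $(u, v_a, (w_l))$, the variables $(\sigma(u,v_a,w_l,x_{a,l}))_{l=1}^n$ are mutually independent because the $x_{a,l}$ are. Integrating first over the $x_{a,l}$ yields, by definition of $\bar{\sigma}^{(k_l)}$,
\[
\E\Big[\prod_{l=1}^n \sigma(u, v_a, w_l, x_{a,l})^{k_l} \,\Big|\, u, v_a, (w_l)\Big] = \prod_{l=1}^n \bar{\sigma}^{(k_l)}(u, v_a, w_l),
\]
and then integrating over $v_a$ gives $R^{(k)}_\infty(u, (w_l))$. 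Using the conditional independence across $a$,
\[
\E\Big[\prod_{a=1}^m \prod_{l=1}^n \sigma(u, v_a, w_l, x_{a,l})^{k_l} \,\Big|\, u, (w_l)\Big] = R^{(k)}_\infty(u, (w_l))^m,
\]
and taking the outer expectation concludes that $\E[(\noverlap)^m] \to \E[R^{(k)}_\infty^m]$.

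The main obstacle is the combinatorial step of reducing the $m$-th moment to a single finite-dimensional expectation: one must verify that diagonal contributions (tuples $(i_1,\dots,i_m)$ with a repeated index) are genuinely $\mathcal{O}(1/N)$ and that the spin exchangeability~\eqref{eq:spin_symm} applies not just to the Hamiltonian but to the full quenched expectation $\E\langle\cdot\rangle$. Once this is in place, everything else follows from bounded convergence and the conditional independence structure of the Aldous--Hoover representation.
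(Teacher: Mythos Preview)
Your proof is correct and takes a genuinely different route from the paper. The paper argues more directly: it first claims that along $(N_j)$ one has $\noverlap - \frac{1}{N}\sum_{i=1}^N\prod_{l=1}^n \sigma^{k_l}(u,v_i,w_l,x_{i,l}) \convdist 0$ (essentially replacing the finite-$N$ spins by their Aldous--Hoover counterparts in bulk), and then applies the law of large numbers conditional on $(u,w_1,\dots,w_n)$ to the i.i.d.\ summands $\prod_l \sigma^{k_l}(u,v_i,w_l,x_{i,l})$. Your approach instead reduces, via exchangeability, the $m$-th moment of $\noverlap$ to a single expectation over $m$ distinct spin indices (plus an $\mathcal O(1/N)$ diagonal remainder), passes to the limit using only finite-dimensional convergence, and then identifies the limit moment by the same conditional-independence computation in the Aldous--Hoover array. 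The advantage of your route is that it is fully rigorous as written: the paper's ``replace and apply LLN'' step glosses over the fact that the Aldous--Hoover representation is only a distributional limit of finite-dimensional marginals, so the bulk replacement of all $N$ spins at once needs justification---which is exactly what your moment reduction provides. The paper's version, on the other hand, is shorter and makes the probabilistic mechanism (conditional LLN) more transparent.
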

\begin{proof}
    Given $k:=(k_1,\dots,k_n)\in K$, by definition the associated multioverlap is
    \begin{equation*}
        R^{(k)} = \frac{1}{N} \sum_{i=1}^N \prod_{l=1}^n (\sigma_i^{l})^{k_l}\,.
    \end{equation*}
    Then, if all the spin variables converge in distribution towards their corresponding Aldous-Hoover limit as described in Section \ref{sec:technical}, we have that
    \begin{equation*}
        R^{(k)} - \frac{1}{N} \sum_{i=1}^N \prod_{l=1}^n \sigma^{k_l}(u,v_i,w_{l},x_{i,l}) \convdist 0\,.
    \end{equation*}
    For almost every $u\in[0,1]$ and $(w_{l})_{l\le n}\in[0,1]^n$ fixed, we have that the variables $\prod_{l=1}^n \sigma^{k_l}(u,v_i,w_{l},x_{i,l})$ are i.i.d. of mean
    \begin{equation*}
        \int_0^1\cdots\int_0^1 \Big(\prod_{l=1}^n \sigma^{k_l}(u,v_i,w_{l},x_{i,l}) \, dx_{i,l}\Big) dv_i\,.
    \end{equation*}
    This, in the notation of Section \ref{sec:technical}, is equal to $\int_0^1 \prod_{l=1}^n \bar \sigma^{(k_l)} dv$. Then, by the law of large numbers
    \begin{equation*}
        \frac{1}{N} \sum_{i=1}^N \prod_{l=1}^n \sigma^{k_l}(u,v_i,w_{l},x_{i,l}) \convdist \int_0^1 \prod_{l=1}^n \bar \sigma^{(k_l)}(u,v_,w_{l}) dv\,,
    \end{equation*}
    from which the result directly follows.
\end{proof}

The following lemma gives an analogous result for the thermal mean of the multioverlaps. Its proof is also very similar.
\begin{lemma}[Asymptotic mean multioverlap representation]\label{lem:mo_limit2}
    Along the subsequence $(N_j)_{j\ge 1}$ defined above, we have that for every $n\geq1$ and $k \in K_n$,
    \begin{equation*}
        \big\la{R^{(k)}}\big\ra \convdist  \int_0^1 \cdots \int_0^1 R^{(k)}_\infty(u,w_1,\dots,w_n) \, dw_1 \dots dw_n := \big\la{R^{(k)}_\infty}\big\ra(u)\,.
    \end{equation*}
\end{lemma}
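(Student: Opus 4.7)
The plan is to mirror the proof of Lemma~\ref{lem:mo_limit1}, using the conditional independence of replicas under the Gibbs measure to reduce the thermal mean of the multioverlap to a product of single-spin thermal means, and then invoking the Aldous-Hoover representation together with laws of large numbers applied over both the replica and the spin indices.

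First I would exploit that, under the Gibbs measure on any finite collection of replicas, distinct replicas are independent conditionally on the disorder, so that
\begin{equation*}
\big\langle R^{(k)} \big\rangle = \frac{1}{N} \sum_{i=1}^N \Big\langle \prod_{l=1}^n (\sigma_i^l)^{k_l} \Big\rangle = \frac{1}{N} \sum_{i=1}^N \prod_{l=1}^n \big\langle \sigma_i^{k_l} \big\rangle.
\end{equation*}
The next key step is to identify the limit of a single-spin thermal mean along the subsequence $(N_j)_{j\ge 1}$. Using once more the conditional independence of replicas, a conditional strong law of large numbers across replicas for the bounded spin $\sigma_i$ yields the (pre-limit) $L^2$ identity $\big\langle \sigma_i^k \big\rangle = \lim_{M\to\infty} \frac{1}{M}\sum_{l=1}^M (\sigma_i^l)^k$. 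Since by the Aldous-Hoover representation $(\sigma_i^l)_{i,l\ge 1}$ converges jointly in distribution to $\sigma(u, v_i, w_l, x_{i,l})$ along the subsequence, for fixed $(u, v_i)$ a strong law of large numbers over the iid pairs $(w_l, x_{i,l})_{l\ge 1}$ gives
\begin{equation*}
\frac{1}{M}\sum_{l=1}^M \sigma^k(u, v_i, w_l, x_{i,l}) \xrightarrow{M\to\infty} \int_0^1\!\int_0^1 \sigma^k(u, v_i, w, x)\, dw\, dx = \int_0^1 \bar{\sigma}^{(k)}(u, v_i, w)\, dw.
\end{equation*}
A diagonal extraction combining both limits then delivers $\big\langle \sigma_i^k \big\rangle \convdist \int_0^1 \bar{\sigma}^{(k)}(u, v_i, w)\, dw$.

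Substituting this back into the factorised expression for $\big\langle R^{(k)} \big\rangle$ and applying the strong law of large numbers over $i$ to the sequence $\prod_{l=1}^n \int_0^1 \bar{\sigma}^{(k_l)}(u, v_i, w_l)\, dw_l$, which is iid in $i$ conditionally on $u$ and $(w_l)_{l\le n}$, exactly as done at the end of the proof of Lemma~\ref{lem:mo_limit1}, one obtains
\begin{equation*}
\big\langle R^{(k)} \big\rangle \convdist \int_0^1 \prod_{l=1}^n \Big( \int_0^1 \bar{\sigma}^{(k_l)}(u, v, w_l)\, dw_l \Big) dv = \int_{[0,1]^n} R^{(k)}_\infty(u, w_1, \dots, w_n)\, dw_1 \cdots dw_n,
\end{equation*}
which is the claimed limit.

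The main obstacle I anticipate is the rigorous justification of the exchange between the large-$N$ limit (giving the Aldous-Hoover representation) and the large-$M$ limit (giving the collapse of the empirical replica average to the thermal mean). Boundedness of the spins in $[-1,1]$ provides uniform integrability, so a diagonal-sequence argument combined with dominated convergence should close the gap, but some care is needed to ensure that the $L^2$ convergence in $M$ is uniform enough in $N$ to commute cleanly with the distributional convergence along the subsequence $(N_j)_{j\ge 1}$.
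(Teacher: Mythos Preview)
Your proof is correct and follows essentially the same strategy as the paper: approximate the thermal mean by an empirical average over many replicas, pass to the Aldous-Hoover limit, and conclude by a law of large numbers, with the same diagonal exchange-of-limits caveat you flag. The only organisational difference is that the paper averages block multioverlaps $R^{(l^{(m)},k)}$ over disjoint $n$-tuples of replicas and then invokes Lemma~\ref{lem:mo_limit1} on each block, whereas you first factor $\langle R^{(k)}\rangle=\frac{1}{N}\sum_i\prod_l\langle\sigma_i^{k_l}\rangle$ via replica independence and treat each single-spin thermal mean separately; one small slip is that in your last LLN step the conditioning should be on $u$ alone, since the $w_l$ have already been integrated out.
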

\begin{proof}
    Let $k$ and $R^{(k)}$ be as before. The reasoning in the proof of this lemma is similar to the previous one. The difference is that, because here we also have a thermal mean, we need to approximate it by an empirical mean over a sufficiently large number of i.i.d. draws from the Gibbs measure; namely, of replicas. Fix a sequence $(t_N)_{N\geq1}$ and define (for $m\geq 0$) $l^{(m)} :=(1+m n,2+m n,\dots,n+nm)\in \mathbb{N}^n$. If $t_N$ is chosen so that it grows sufficiently fast then 
    \begin{equation*}
        \Etherm{R^{(k)}} - \frac{1}{t_N} \sum_{m=0}^{t_N-1} R^{({l^{(m)}},k)} \convdist 0\,,
    \end{equation*}
    where $R^{(l^{(m)},k)}:=N^{-1} \sum_{i=1}^N \prod_{a=1}^n (\sigma_i^{l^{(m)}_a})^{k_a}$ is the multioverlap with powers $k$ and replica indices $l^{(m)}$. By Lemma \ref{lem:mo_limit1} this implies
    \begin{equation*}
        \Etherm{R^{(k)}} - \frac{1}{t_N} \sum_{m=0}^{t_N-1} \int_0^1 \prod_{l=1}^n \bar \sigma^{(k_l)}(u,v,w_{l+nm}) \, dv \convdist 0\,.
    \end{equation*}
    As before, by the law of large numbers, we finally get
    \begin{equation*}
        \Etherm{R^{(k)}} \convdist \int_0^1 \cdots \int_0^1 \Big(\prod_{l=1}^n \bar \sigma^{(k_l)}(u,v,w_l) \, dw_l\Big) dv\,.
    \end{equation*}
\end{proof}

Obviously Theorem~\ref{prop:mo_thm_conc} is valid for the perturbed model (because the perturbed Hamiltonian $\hamil'$ remains concave). We have that under its hypotheses (for all $n\geq1$ and $k\in K_n$) $\noverlap$ converges weakly to $\Etherm{\noverlap}$; which by the unicity of the limit and Lemmas \ref{lem:mo_limit1} and \ref{lem:mo_limit2} implies that in the subsequential limit along $(N_j)_{j\ge 1}$,
\begin{equation*}
    R^{(k)}_\infty \eqdist \big\la{R^{(k)}_\infty}\big\ra\,.
\end{equation*}
Therefore $R^{(k)}_\infty$ is almost surely independent of $(w_l)_{l\le n}$. In the same way that Corollary~\ref{cor:asymp_spin_dist} is proven in Section~\ref{sec:proof_coro}, we can deduce from this fact that there exists in the above subsequential limit a simplified Aldous-Hoover representation $\sigma(u,v_i,x_{i,l})$ of the spin variables which is independent of $w_l$ and that is equivalent to the original one $\sigma(u,v_i,w_l,x_{i,l})$ for asymptotically describing the replicated system $(\sigma_{i}^l)_{i,l\ge 1}$. The equivalence between these two representations is in the sense that the joint distribution of the variables $(\sigma(u,v_i,x_{i,l}))_{i,l\ge 1}$ is the same as the one of $(\sigma(u,v_i,w_{l},x_{i,l}))_{i,l\ge 1}$. Slightly abusing notation we continue to denote with the same symbol $\sigma$ the new function associated with the simplified representation, and by $\bar{\sigma}^{(k)}$ the associated generalised magnetisation. This then proves the following corollary.
\begin{corollary}[Simplified Aldous-Hoover representation]\label{cor:thm_pure_state}
     Under the hypotheses of Theorem~\ref{prop:mo_thm_conc}, there exists some measurable function $\sigma:[0,1]^3 \to \interv$ such that along the converging subsequence $(N_j)_{j\ge 1}$, for every $i,l\geq1$, we have that
\begin{equation*}
    \sigma_i^l \convdist \sigma(u,v_i,x_{i,l})\,,
\end{equation*}
where (for every $i,l\geq1$) $u,v_i,x_{i,l}$ are i.i.d. uniform random variables on $[0,1]$. The generalised magnetisation associated to the simplified representation reads $$\bar \sigma^{(k)}(u,v_i):= \int_0^1 \sigma^k(u,v_i,x)\, dx\,.$$
\end{corollary}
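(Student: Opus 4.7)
The starting point is that Theorem~\ref{prop:mo_thm_conc} applies to the full perturbed Hamiltonian $\hamil'$, since $\hamil$ is concave by assumption, $\hamil^{\rm gauss}$ is strictly concave, and $\hamil^{\rm poiss}$ is a non-positive combination of the convex polynomials $P_I$ and hence concave. The thermal multioverlap bound therefore yields $\langle(\noverlap-\langle\noverlap\rangle)^2\rangle_0\to 0$ deterministically along the extracted subsequence $(N_j)_{j\ge1}$. Combined with Lemmas~\ref{lem:mo_limit1}--\ref{lem:mo_limit2} and a further subsequence extraction for joint convergence, this forces, for every $n\ge 1$ and $k\in K_n$,
\begin{equation*}
R^{(k)}_\infty(u,w_1,\ldots,w_n)=\big\langle R^{(k)}_\infty\big\rangle(u)\quad\text{almost surely},
\end{equation*}
a function of $u$ alone, so $R^{(k)}_\infty$ is a.s.\ independent of $(w_l)_{l\le n}$.

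The next step is to upgrade this to a.e.\ $w$-independence of each generalised magnetisation $\bar\sigma^{(k)}(u,v,w)$. Using Lemma~\ref{lem:mo_limit1} with $n=2$ and $k_1=k_2=k$, the a.s.\ constancy of $R^{(k,k)}_\infty$ in $(w_1,w_2)$ says that the $L^2(dv)$ inner product of $\bar\sigma^{(k)}(u,\cdot,w_1)$ and $\bar\sigma^{(k)}(u,\cdot,w_2)$ is a.e.\ constant in $(w_1,w_2)$. Expanding
\begin{equation*}
\|\bar\sigma^{(k)}(u,\cdot,w_1)-\bar\sigma^{(k)}(u,\cdot,w_2)\|_{L^2(dv)}^2
\end{equation*}
into three such inner products, all equal to the same constant, yields zero. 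Hence $\bar\sigma^{(k)}(u,v,w_1)=\bar\sigma^{(k)}(u,v,w_2)$ for a.e.\ $v$ and a.a.\ $(w_1,w_2)$; intersecting over the countable family $k\ge 1$ gives the desired $w$-independence.

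Finally, to build the simplified Aldous--Hoover representation $\sigma(u,v,x)$: since spins take values in the compact interval $\interv$, the conditional law of $\sigma(u,v,w,X)$ given $(u,v)$ (with $X\sim\mathcal U[0,1]$ independent) is determined by the integer moments $\bar\sigma^{(k)}(u,v,w)$, which are a.e.\ $w$-independent, so this conditional law itself does not depend on $w$. Define the new $\sigma(u,v,x)$ as a jointly measurable quantile transform realising this common conditional law. The residual joint-distribution check uses the conditional independence already present in the original Aldous--Hoover array: given $u$ and $(v_i)_{i\ge 1}$, the columns indexed by $l$ are i.i.d.\ because each depends only on its own $w_l$ and its own $x_{i,l}$'s, and within a column the entries are conditionally independent given $w_l$ with marginal law $\sigma(u,v_i,w_l,\cdot)$; marginalising $w_l$ using the $w$-invariance yields the same joint law as $(\sigma(u,v_i,x_{i,l}))_{i,l\ge 1}$ built from the new function, which is exactly the claimed equivalence. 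This is the same strategy as in the proof of Corollary~\ref{cor:asymp_spin_dist} in Section~\ref{sec:proof_coro}. The main obstacle is this last step: upgrading the scalar moment identity of the middle paragraph to a full joint-distribution identity on the exchangeable array while ensuring the quantile transform is jointly measurable in $(u,v,x)$; once the $L^2$ computation is in place, the remainder is careful bookkeeping on the Aldous--Hoover conditional independence structure.
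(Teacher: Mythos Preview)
Your first paragraph is correct and matches the paper. The gap is in the $L^2$ step. The a.s.\ constancy of $R^{(k,k)}_\infty(u,w_1,w_2)$ is an almost-everywhere statement in $(w_1,w_2)\in[0,1]^2$: it fixes the cross term $\langle h_{w_1},h_{w_2}\rangle_{L^2(dv)}$ (with $h_w:=\bar\sigma^{(k)}(u,\cdot,w)$) to a constant $c(u)$ for a.e.\ $(w_1,w_2)$, but it says nothing about the diagonal terms $\|h_{w_1}\|^2=\langle h_{w_1},h_{w_1}\rangle$ and $\|h_{w_2}\|^2$, which sit on the Lebesgue-null set $\{w_1=w_2\}\subset[0,1]^2$. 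The sentence ``three such inner products, all equal to the same constant'' is therefore unjustified, and the expansion does not collapse to zero as written. The conclusion is still true but needs an extra argument: with $\bar h:=\int_0^1 h_w\,dw$ one checks $\langle h_w,\bar h\rangle=\|\bar h\|^2=c(u)$ for a.e.\ $w$, so $g_w:=h_w-\bar h$ satisfies $\langle g_{w_1},g_{w_2}\rangle=0$ for a.e.\ $(w_1,w_2)$; the positive operator $T:=\int_0^1 g_w\otimes g_w\,dw$ then has $Tg_w=0$ a.e., hence $T^2=0$, hence $T=0$, and $\int_0^1\|g_w\|^2\,dw=\mathrm{tr}\,T=0$ forces $h_w=\bar h$ a.e. Once this is in place your quantile-transform construction and the conditional-independence bookkeeping go through.

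For comparison, the paper's own route is shorter and avoids this issue entirely: rather than first proving $w$-independence of $\bar\sigma^{(k)}$ and rebuilding the representation, it freezes the replica variables at a generic sequence $(w_{0,l})_{l\ge1}$ chosen off the countably many null sets and verifies directly that every joint spin moment $\E\langle\prod_{(i,l)\in\mathcal{C}}\sigma_i^l\rangle$ --- which can be written as an expectation of a product of multioverlaps --- is unchanged by this freezing; equality of all joint moments on the compact $\interv$ then gives equality of joint laws. This is exactly the argument of Section~\ref{sec:proof_coro} restricted to the $w$-variables.
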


This kind of behaviour in the large system limit is usually referred in mathematical physics literature as a \emph{thermal pure state} (for more details on this and the use of the Aldous-Hoover representation in spin glasses, see \cite{panchenko2014replica,panchenko2016structure,panchenko2013spin}).

\section{Proofs}\label{sec:proofs}


\subsection{Proof of Theorem~\ref{prop:mo_thm_conc}}

The result is stated for the regularised (non perturbed) system, but we prove it here for the perturbed one. Of course, one can simply take the perturbation strenght $t=0$ in \eqref{eq:poiss_pert} to recover the result for the regularised model.

Fix $n\geq 1$. By definition, conditionally on the disorder $(J,\pi,U)$ and for every fixed $\lambda$, the $n$-replicas system has density proportional to $\exp\sum_{l=1}^n \hamil'(\sigma^l\mid J,\pi,U,\lambda)$. Because $\hamil'(\sigma \mid J,\pi,U,\lambda)$ is a concave function on $\Sigma_N$ with, owing to the Gaussian regularisation, an Hessian upper bounded by $-\eps_N\mathbb{I}$, then $\sum_{l=1}^n \hamil'(\sigma^l\mid J,\pi,U,\lambda)$ is also concave on $\Sigma_N^n$ with Hessian upper bounded by $- \eps_N\mathbb{I}$.

Now, fix $k\in K_n$. Then the associated multioverlap $\noverlap$ is a function on $\Sigma_N^n$ with partial derivatives given (for every $i\in[N]$ and $j\in[n]$) by
\begin{equation*}
    \frac{\partial}{\partial \sigma_i^{l}}R^{(k)} = \frac{1}{N} k_l (\sigma_i^l )^{k_l-1} \prod_{m\neq l}^n (\sigma_i^m )^{k_m}\,.
\end{equation*}
Thus, the square of the norm of the gradient of $\noverlap$ is given by
\begin{equation*}
    \big\|\nabla \noverlap\big\|^2 = \frac{1}{N^2} \sum_{i=1}^N \sum_{l=1}^n k_l^2 (\sigma_i^j)^{2(k_l-1)} \prod_{m\neq j}^n (\sigma_i^m)^{2 k_m} \leq \frac{\sum_{l=1}^n k_l^2}{N}\,.
\end{equation*}
An application of Corollary \ref{cor:brascampstrong} then ends the proof of Theorem~\ref{prop:mo_thm_conc}.


\subsection{Proof of strong replica symmetry for the perturbed model [Proposition \ref{prop:mo_conc}]}

Throughout all the proofs of this subsection, the Poisson perturbation is assumed to be at full strength ($t=1$) and therefore the parameter $t$ is omitted from the notation. Let $$\theta_{I,j}^l := P_I(\sigma_{U^I_j}^l)\quad \mbox{as well as}\quad  E_I(\sigma^l) := \sum_{j=1}^{\pi_I} \theta_{I,j}^l \quad \mbox{for every $I\in\dyadic$}\,.$$ Recall that when the replica index is omitted, it means that the replica considered is the first one $\sigma=\sigma^1$, or $\theta_{I,j}=\theta_{I,j}^1$. The Poisson perturbation of the Hamiltonian may be expressed as
\begin{equation*}
    \hamil^{\rm poiss}(\sigma^l\mid \pi,U,\lambda) = - \sum_{I \in \dyadic} \lambda_I E_I(\sigma^l)\,.
\end{equation*}
A core inequality for our proof is a version of the Franz-de Sanctis identity in spin glasses \cite{de2009self}.
\begin{theorem}[Franz-de Sanctis type inequality]\label{thm:FdS_ineq}
    If $\hamil\in\mathcal{C}^2(\Sigma_N)$ is concave in the sense of \eqref{eq:concave} and obeys the relation \eqref{eq:spin_symm}, then we have that for every $n\geq1$, $I\in\mathcal{I}$, and $f_n:\Sigma_N^n \to \interv$ a function of replicas $(\sigma^l)_{l=1}^n$ of the perturbed system,
    \begin{equation*}
        \Elam{ \left| \Edis{\frac{\Etherm{ f_n  \theta_{I,1} e^{-\lambda_I \sum_{l=1}^n \theta_{I,1}^l} } }{\Etherm{e^{ - \lambda_I \theta_{I,1}}}^n} } - \Ethdis{f_n } \Edis{ \frac{\Etherm{\theta_{I,1} e^{-\lambda_I  \theta_{I,1}}}}{\Etherm{e^{-\lambda_I \theta_{I,1}}}}} \right|} \leq 5 \sqrt{\frac{(v'_N)^{1/2}+L (s_N)^{1/2}}{s_N}} + \sqrt{\frac{2}{s_N}}\,,
    \end{equation*}
    for some constant $L > 0$ (recall $v'_N$ was defined in \eqref{eq:def_v_N_pois}).
\end{theorem}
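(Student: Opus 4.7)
The starting observation is that the ratio on the left and the ratio in the subtracted product are both instances of ``adding one Poisson point'' to the perturbation. Indeed, because the Poisson perturbation gives an extra cavity factor $e^{-\lambda_I \theta_{I,1}^l}$ for each replica, one has
\begin{equation*}
\frac{\langle f_n\,\theta_{I,1}\,e^{-\lambda_I\sum_l\theta_{I,1}^l}\rangle}{\langle e^{-\lambda_I\theta_{I,1}}\rangle^n}\;=\;\langle f_n\,\theta_{I,1}^1\rangle^{+},
\end{equation*}
where $\langle\cdot\rangle^{+}$ denotes the $n$-replica Gibbs measure associated to $\mathcal{H}'$ augmented by a single extra Poisson point at the uniform index $U_1^I$. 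By the Mecke--Slivnyak formula for the Poisson point process $\eta_I:=\{U_j^I\}_{j=1}^{\pi_I}$ (whose intensity has total mass $s_N$), taking the expectation over $(\pi_I,U^I)$ transforms $\mathbb{E}\langle f_n\theta_{I,1}^1\rangle^{+}$ into $s_N^{-1}\mathbb{E}\langle f_n\sum_{j=1}^{\pi_I}\theta_{I,j}^1\rangle = s_N^{-1}\mathbb{E}\langle f_n E_I^1\rangle$, and similarly $\mathbb{E}[\langle\theta_{I,1}e^{-\lambda_I\theta_{I,1}}\rangle/\langle e^{-\lambda_I\theta_{I,1}}\rangle] = s_N^{-1}\mathbb{E}\langle E_I\rangle$. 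The claim thus reduces to bounding, in $\lambda$-mean,
\begin{equation*}
\mathbb{E}\langle f_n E_I^1/s_N\rangle \;-\; \mathbb{E}\langle f_n\rangle\cdot\mathbb{E}\langle E_I^1/s_N\rangle.
\end{equation*}

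This difference I would split into a thermal part $\mathbb{E}\langle (f_n-\langle f_n\rangle)(E_I^1/s_N-\langle E_I^1/s_N\rangle)\rangle$ and a disorder part $\mathrm{Cov}(\langle f_n\rangle,\langle E_I^1/s_N\rangle)$. For the thermal contribution Cauchy--Schwarz and the simplified Brascamp--Lieb inequality of Corollary~\ref{cor:brascampstrong} apply: the perturbed Hamiltonian $\mathcal{H}'$ is concave with Hessian upper bounded by $-\varepsilon_N\mathbb{I}$, and $\|\nabla(E_I/s_N)\|^2$ is of order $\pi_I/s_N^2$ (the polynomials $P_I$ having uniformly controlled derivatives by the construction of $\mathcal{I}$), so the thermal variance of $E_I/s_N$ is at most $C\pi_I/(s_N^2\varepsilon_N)$; averaging over the Poisson disorder and using $\mathbb{E}\pi_I=s_N$ produces a contribution of order $L/\sqrt{s_N}$. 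The residual $\sqrt{2/s_N}$ summand arises from the pure Poisson fluctuation $\mathbb{E}|\pi_I/s_N-1|\leq(\mathrm{Var}\,\pi_I)^{1/2}/s_N = 1/\sqrt{s_N}$, which must be paid when one replaces $E_I$ by a quantity with $\pi_I$ swapped for its mean in order to isolate the thermal part cleanly.

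For the disorder part I would use the standard convexity trick. Note that $\partial_{\lambda_I}F'_N=-\langle E_I\rangle$ and that $F'_N$ is convex in $\lambda_I$, its second derivative being the non-negative Gibbs variance of $E_I$, itself controlled by Brascamp--Lieb uniformly in the Poisson disorder. Since the hypothesis provides $\|F'_N-\mathbb{E}_J F'_N\|_{L^2}\leq (v'_N)^{1/2}$ uniformly in $\lambda$, the elementary lemma ``two convex functions within $L^2$-distance $\delta$ with second derivatives bounded by $M$ have first derivatives at $L^1$-distance $\lesssim\sqrt{M\delta}$'' yields a control on $\mathbb{E}_{\lambda_I}|\langle E_I\rangle-\mathbb{E}_J\langle E_I\rangle|$; dividing by $s_N$ and tracking constants produces the $\sqrt{(v'_N)^{1/2}/s_N}$ piece. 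To extract the cross-correlation with $f_n$ rather than only the marginal variance of $\langle E_I\rangle$, the same argument is run on the tilted two-parameter free entropy $(\lambda_I,t)\mapsto\log\int\!f_n^{\,t} e^{\sum_l\mathcal{H}'(\sigma^l)}\prod_l d\sigma^l$, which remains convex in each variable; the mixed second derivative at $t=0$ produces precisely $\mathrm{Cov}(\langle f_n\rangle,\langle E_I^1\rangle)$.

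The main obstacle will be keeping the three concentration mechanisms---Poisson fluctuation, Brascamp--Lieb thermal concentration, and disorder concentration via the convexity trick---in sync so that they combine into the precise form $5\sqrt{((v'_N)^{1/2}+L\sqrt{s_N})/s_N}+\sqrt{2/s_N}$. The delicate points are: obtaining a second-derivative bound uniform in the Poisson variables $\pi_I$ so that the convexity trick gives the correct dependence on $v'_N$ rather than a worse one; and verifying that the $f_n$-tilt needed to upgrade from marginal to joint concentration preserves both convexity in $\lambda_I$ and the Brascamp--Lieb log-concavity that underlies the thermal estimate.
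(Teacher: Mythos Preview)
Your reduction via the Poisson/Mecke identity is correct and is exactly what the paper does: the two ratios equal $s_N^{-1}\mathbb{E}\langle f_n E_I(\sigma^1)\rangle$ and $s_N^{-1}\mathbb{E}\langle E_I\rangle$, so the task becomes bounding $s_N^{-1}\big|\mathbb{E}\langle f_n E_I\rangle - \mathbb{E}\langle f_n\rangle\,\mathbb{E}\langle E_I\rangle\big|$. From that point on, however, your estimates do not deliver the stated bound.

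\emph{Thermal part.} Brascamp--Lieb gives $\langle(E_I-\langle E_I\rangle)^2\rangle\le \|\nabla E_I\|^2/\varepsilon_N$, which after averaging and Cauchy--Schwarz produces a factor of order $(s_N\varepsilon_N)^{-1/2}$, \emph{not} $s_N^{-1/2}$. Since $\varepsilon_N\to 0$ and no relation between $s_N$ and $\varepsilon_N$ is assumed in the statement, this need not even vanish. The paper avoids Brascamp--Lieb entirely here: from $-\partial_{\lambda_I}\mathbb{E}\langle E_I\rangle=\mathbb{E}\langle(E_I-\langle E_I\rangle)^2\rangle$ and integration over $\lambda_I\in[1/2,1]$ one gets $\mathbb{E}_{\lambda_I}\mathbb{E}\langle(E_I-\langle E_I\rangle)^2\rangle\le 2\,\mathbb{E}\pi_I=2s_N$, with no $\varepsilon_N$. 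The term $\sqrt{2/s_N}$ is precisely this thermal piece divided by $s_N$, not a Poisson fluctuation of $\pi_I$.

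\emph{Disorder part.} Your convexity argument requires a second-derivative bound $M$ uniform in the Poisson disorder; but $\partial_{\lambda_I}^2 F'_N=\langle(E_I-\langle E_I\rangle)^2\rangle$ is only bounded in terms of $\pi_I$, which is unbounded. The paper's convexity lemma (Lemma~\ref{lem:der_conv}) sidesteps this: the correction terms $C^\pm_\delta$ involve increments of $g'=-\mathbb{E}\langle E_I\rangle$, which after $\lambda_I$-averaging are $O(\delta s_N)$ using only the trivial bound $0\le \mathbb{E}\langle E_I\rangle\le s_N$. The contribution $(v'_N)^{1/2}+L\sqrt{s_N}$ then comes from splitting $\mathbb{E}|F'_N-\mathbb{E}F'_N|$ into the $J$-fluctuation (bounded by $(v'_N)^{1/2}$) and the $\pi$-fluctuation, the latter controlled by Efron--Stein; your sketch does not account for this $L\sqrt{s_N}$ term.

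\emph{The $f_n$-tilt.} This is both unnecessary and ill-posed. Unnecessary because $|f_n|\le 1$ directly gives $|\mathbb{E}\langle f_n E_I\rangle-\mathbb{E}\langle f_n\rangle\,\mathbb{E}\langle E_I\rangle|\le \mathbb{E}\langle|E_I-\mathbb{E}\langle E_I\rangle|\rangle$, which is all the paper uses. Ill-posed because $f_n$ may be negative, so $f_n^{\,t}$ is undefined, and even for $f_n>0$ the second $t$-derivative is the Gibbs variance of $\log f_n$, which is uncontrolled.
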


Note that the bound is uniform over $I\in\dyadic$ and $n\geq1$. To prove this, we will first obtain a bound for the mean absolute difference between the random variables $E_I(\sigma)$ and their means. The approach we follow is reminiscent of the derivation of the Ghirlanda-Guerra identities in spin glasses \cite{ghirlanda1998general}.
\begin{lemma}[Energy concentration]\label{lem:ener_conc}
    Suppose $\hamil$ obeys the hypotheses of Theorem \ref{thm:FdS_ineq}. Then there exists some constant $L > 0$ s.t., for every $I\in\dyadic$, we have that 
    \begin{equation*}
        \Elam{ \Ethdis{ \big| E_I(\sigma) -\Ethdis{ E_I(\sigma)} \big| } } \leq 5 \sqrt{((v'_N)^{1/2}+ L (s_N)^{1/2}) s_N} + \sqrt{2 s_N}\,.
    \end{equation*}
\end{lemma}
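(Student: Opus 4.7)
The target quantity will be decomposed by the triangle inequality into a thermal and a disorder part:
\[
\mathbb{E}_{\lambda}\mathbb{E}\big\langle|E_I(\sigma)-\mathbb{E}\langle E_I(\sigma)\rangle|\big\rangle \leq \underbrace{\mathbb{E}_{\lambda}\,\mathbb{E}\big\langle|E_I - \langle E_I\rangle|\big\rangle}_{(A)} \;+\; \underbrace{\mathbb{E}_{\lambda}\,\mathbb{E}\big|\langle E_I\rangle - \mathbb{E}\langle E_I\rangle\big|}_{(B)},
\]
so the two summands of the claimed bound will come respectively from $(A)$ and $(B)$. The engine driving both estimates is that, as a function of the single variable $\lambda_I$ (everything else fixed), the perturbed free entropy $F'_N$ is convex, since $\mathcal{H}^{\rm poiss}$ depends linearly on $\lambda_I$; moreover
\[
\partial_{\lambda_I} F'_N = -\langle E_I\rangle,\qquad \partial^2_{\lambda_I} F'_N = \langle E_I^2\rangle - \langle E_I\rangle^2 \geq 0.
\]

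For $(A)$, I will integrate the identity $\partial^2_{\lambda_I} F'_N = \mathrm{Var}_{G'_N}(E_I)$ against the uniform density on $[1/2,1]$ (density $2$). By the fundamental theorem of calculus this collapses to $2(\langle E_I\rangle_{\lambda_I = 1/2} - \langle E_I\rangle_{\lambda_I=1})$, which is bounded pointwise by $2\pi_I$ using $0 \le E_I \le \pi_I$; taking $\mathbb{E}$ gives $\mathbb{E}_{\lambda}\,\mathbb{E}\langle (E_I-\langle E_I\rangle)^2\rangle \le 2s_N$, and Cauchy--Schwarz yields $(A) \le \sqrt{2 s_N}$, the additive term in the target bound.

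For $(B)$, I will invoke the standard convex-function derivative-concentration lemma: if $\phi$ is convex and $h:=\phi - \mathbb{E}\phi$, then for every $\delta>0$,
\[
|\phi'(\lambda) - \mathbb{E}\phi'(\lambda)| \;\le\; \frac{|h(\lambda-\delta)| + 2|h(\lambda)| + |h(\lambda+\delta)|}{\delta} \;+\; \big(\mathbb{E}\phi'(\lambda+\delta) - \mathbb{E}\phi'(\lambda-\delta)\big),
\]
and apply it with $\phi(\lambda_I) = F'_N$. After taking $\mathbb{E}$ and averaging over $\lambda_I \in[1/2,1]$, the deterministic "bias" term telescopes (via the same fundamental-theorem-of-calculus trick used for $(A)$) to $\mathcal{O}(\delta s_N)$. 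For the stochastic term I will decompose $h = (F'_N - \mathbb{E}_J F'_N) + (\mathbb{E}_J F'_N - \mathbb{E} F'_N)$: the first summand has $L^2$ norm at most $(v'_N)^{1/2}$ by hypothesis, while the second is controlled by a Poisson Poincar\'e/Efron--Stein-type estimate yielding an $L^2$ bound of order $L (s_N)^{1/2}$ for some constant $L>0$, exploiting that altering a single Poisson atom $\pi_{I,i}$ modifies $F'_N$ only through the small, summable increment $\lambda_I P_I(\sigma_i)$ (this is where the summability built into the $P_I$'s via the factor $2^{-\iota(I)-2m}$ is used). Balancing the two contributions in the choice of $\delta$ then produces $(B) \le 5\sqrt{((v'_N)^{1/2}+L(s_N)^{1/2})s_N}$.

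The main obstacle is precisely this second step: $v'_N$ only measures the variance of $F'_N$ with respect to the original disorder $J$, so the fluctuations of $F'_N$ induced by the Poisson disorder $(\pi,U)$ must be handled by a separate self-averaging argument, and this is what supplies the extra $L(s_N)^{1/2}$ term inside the square root. Once this Poisson concentration is in place, the two pieces combine via the triangle inequality to yield the stated bound.
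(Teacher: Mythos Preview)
Your proposal is correct and follows essentially the same route as the paper: the same thermal/disorder split, the same integration of $\partial_{\lambda_I}^2 F'_N$ over $\lambda_I\in[1/2,1]$ for $(A)$, and for $(B)$ the same convexity lemma combined with the split of $F'_N-\mathbb{E}F'_N$ into a $J$-part controlled by $(v'_N)^{1/2}$ and a Poisson-part controlled by Efron--Stein (yielding $L\sqrt{s_N}$), followed by optimizing $\delta$. The only cosmetic differences are that the paper writes the decomposition as $\mathbb{E}|F'_N-\mathbb{E}F'_N|\le \mathbb{E}|F'_N-\mathbb{E}_\pi F'_N|+\mathbb{E}|F'_N-\mathbb{E}_J F'_N|$ (via Jensen) rather than your exact telescoping $h=(F'_N-\mathbb{E}_J F'_N)+(\mathbb{E}_J F'_N-\mathbb{E}F'_N)$, and bounds the bias term $C_\delta^\pm$ directly by the mean value theorem rather than by averaging in $\lambda_I$.
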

\begin{proof}
    In this proof we use the two alternative forms of the Poisson perturbation given in definitions \eqref{eq:poiss_pert} and \eqref{eq:poiss_pert2}. We will show the conclusion by bounding its \emph{thermal fluctuations} and \emph{disorder fluctuations}. That is, we will use that
    \begin{equation}\label{eq:sep_thm_dis}
        \begin{split}
            \Elam{ \Ethdis{ \big| E_I(\sigma) -\Ethdis{ E_I(\sigma)} \big| } } & \leq  \Elam{\Ethdis{\big| E_I(\sigma) -\Etherm{ E_I(\sigma)} \big|}}  + \Elam{\big| \Etherm{E_I(\sigma)} -\Ethdis{ E_I(\sigma)} \big|} 
        \end{split}
    \end{equation}
    and bound both terms on the right separately.
    
    Here and in the rest of the paper, whenever we interchange derivatives with expectations, we are either implicitly applying in a straightforward way Proposition A.1.2 of \cite{Talagrand2011spina} or dominated convergence. In order to bound the first term we exploit that
    \begin{equation*}
        \frac{d}{d\lambda_I} \Ethdis{E_I(\sigma)} = - \EThdis{\big(E_I(\sigma)-\Etherm{E_I(\sigma)}\big)^2}.
    \end{equation*}
    Which, integrating over $\lambda_I$, implies that
    \begin{equation*}
        \E_{\lambda_I}{\EThdis{(E_I(\sigma)-\Etherm{E_I(\sigma)})^2}} = - 2 \Ethdis{E_I(\sigma)} \big|_{\lambda=1/2}^{\lambda=1} \leq 2 \E{\pi_I} = 2 s_N\,,
    \end{equation*}
    where for the last inequality we used that the values of polynomials $P_I(\cdot)$ are by construction contained in $[0,1]$. Then, by Cauchy-Schwarz inequality we have that
    \begin{equation}\label{eq:bound_thml}
        \Elam{\Ethdis{\big| E_I(\sigma) -\Etherm{ E_I(\sigma)} \big|}} \leq \sqrt{2 s_N}\,.
    \end{equation}
    
    For the second term, we have (recall definition \eqref{Free_en_pert} for the free energy)
    \begin{equation*}
        \frac{d}{d\lambda_I} F'_N = - \Etherm{E_I(\sigma)} \quad \mbox{and} \quad \frac{d}{d\lambda_I} \E F'_N = - \Ethdis{E_I(\sigma)}.
    \end{equation*}
    The second derivatives are instead, as often in statistical mechanics, related to fluctuations:
    \begin{equation*}
        \frac{d^2}{d\lambda_I^2} F'_N = \ETherm{\big(E_I(\sigma)-\Etherm{E_I(\sigma)}\big)^2} \geq 0 \quad \mbox{and} \quad \frac{d^2}{d\lambda_I^2} \E F'_N = \EThdis{\big(E_I(\sigma)-\Etherm{E_I(\sigma)}\big)^2} \geq 0\,.
    \end{equation*}
    We then have that $F'_N$ and $\E F'_N$ are both convex functions of $\lambda_I$ and that $|dF'_N/d{\lambda_I} -d \,\E F'_N/d {\lambda_I} | = |\Etherm{E_I(\sigma)}-\Ethdis{E_I(\sigma)}|$. We can therefore use the following standard lemma for convex functions (for a proof, see Lemma 3.2 of \cite{panchenko2013sherrington}).
    
    \begin{lemma}[Bound for convex functions] \label{lem:der_conv}
        Let $G(x)$ and $g(x)$ be convex functions. Let $\delta > 0$ and define $C^{-}_\delta(x) := g'(x) - g'(x-\delta) \geq 0$ and $C^{+}_\delta(x) := g'(x+\delta) - g'(x)  \geq 0$. Then,
        \begin{align*}
        |G'(x) - g'(x)| \leq \delta^{-1} \sum_{u \in \{x-\delta,\, x,\, x+\delta\}} |G(u)-g(u)| + C^{+}_\delta(x) + C^{-}_\delta(x)\,.
        \end{align*}
    \end{lemma}
    
    Choosing $F_N'$ as $G$ and $\E F'_N$ as $g$, by the mean value theorem and the uniform upper bound for the mean of $E_I(\sigma)$, we have that both $C^-_{\delta_N}$ and $C^+_{\delta_N}$ are smaller or equal than $\delta_N s_N$. For the other three terms, let $x \in [1/2,1], \ u \in \{ x- \delta_N, x ,x +\delta_N \}$ (with $\delta_N<1/2$). By triangular and Jensen's inequalities we have (here the free energies are seen as functions of $\lambda_I$)
    \begin{equation}\label{eq:dmitry_split}
        \begin{split}
            \E{\big| F'_N(\lambda_I=u) - \E F'_N(u) \big|}  \leq &  \ \E{\big| F'_N(u)  - \E_\pi F'_N(u) \big|} + \E{\big| F'_N(u) - \E_J F'_N(u) \big|},
        \end{split}
    \end{equation}
    where $\E_J( \cdot )$ is the expectation w.r.t. the disorder $J$ only (i.e., the variables $\pi$ are fixed) and vice versa for $\E_\pi( \cdot )$ (Jensen's inequality was used as $\E{| F'_N(\lambda_I) - \E_J F'_N(\lambda_I) |}\ge \E{| \E_\pi F'_N(\lambda_I) - \E_\pi\E_J F'_N(\lambda_I)|}$). For the first term on the r.h.s. of \eqref{eq:dmitry_split} we have by Cauchy-Schwarz and Efron-Stein's variance bound (below $\pi'_{I,i}$ is an independent copy of $\pi_{I,i}$)
    \begin{equation*}
        \begin{split}
            \E{\big| F'_N(\lambda_I)  - \E_\pi F'_N(\lambda_I) \big|} & \leq \sqrt{\E \big(F'_N(\lambda_I)  - \E_\pi F'_N(\lambda_I) \big)^2} \\
            & \leq \sqrt{\frac{1}{2} \sum_{I\in\mathcal{I}} \sum_{i=1}^N \E \big(\ln\big\langle \exp{(- \lambda_I (\pi_{I,i}-\pi'_{I,i}) P_I(\sigma_{i}))}\big\rangle\big)^2} \\
            & \leq \sqrt{\frac{1}{2} \sum_{I\in\mathcal{I}}\sum_{i=1}^N 2^{-2\iota(I)-4m}m^2 \, \E(\pi_{I,i} - \pi'_{I,i})^2} \\
            & \leq L \sqrt{s_N}\,,
        \end{split}
    \end{equation*}
    where $m$ is the power of the monomial of maximum order entering $P_I$, and $L > 0$ is some constant. We used that $P_I$ is bounded by $2^{-\iota(I)-2m}m$, $\lambda_I\le 1$ and $\E(\pi_{I,i} - \pi'_{I,i})^2=2\,{\rm Var}\, \pi_{I,i}=2s_N/N$. For the second term in \eqref{eq:dmitry_split} we directly have that $\E{| F'_N(\lambda_I=u) - \E_J F'_N(\lambda_I=u)|} \leq (v'_N)^{1/2}$ by definition.
    
    The above bounds along with the previous convexity lemma and taking expectation over $\lambda$ yield
    \begin{equation}\label{eq:bound_dis}
       \Elam{\big| \Etherm{E_I(\sigma)} -\Ethdis{ E_I(\sigma)} \big|} \leq 3 \frac{(v'_N)^{1/2} + L (s_N)^{1/2}}{\delta_N} + 2 \delta_N s_N\,.
    \end{equation}
    Finally, we set $\delta_N = \sqrt{((v'_N)^{1/2}+L (s_N)^{1/2}) /s_N}$ and combine \eqref{eq:sep_thm_dis}, \eqref{eq:bound_thml} and \eqref{eq:bound_dis}.
\end{proof}

Using this lemma we can prove the Franz-de Sanctis type inequality for the perturbed system.

\begin{proof}[Proof of Theorem \ref{thm:FdS_ineq}]
    By Lemma \ref{lem:ener_conc}, for any $I\in\dyadic$,
    \begin{equation*}
    \Elam{\big|
    \E\langle {f_n  E_I(\sigma^1)}\rangle
    - \E\langle{ f_n }\rangle
    \E\langle{ E_I(\sigma^1) }\rangle  \big|} \leq 5 \sqrt{((v'_N)^{1/2}+ L (s_N)^{1/2}) s_N} + \sqrt{2 s_N}\,.
    \end{equation*}
    We are then left to prove that
    \begin{equation}\label{eq:equiv_FdS1}
        \E\big\langle {f_n  E_I(\sigma^1)}\big\rangle = s_N \Edis{\frac{\big\langle {f_n  \theta_{I,1} e^{- \lambda_I \sum_{l=1}^n \theta_{I,1}^l}}\big\rangle}{\big\langle{e^{- \lambda_I \theta_{I,1}}}\big\rangle^n} } \quad \mbox{and} \quad \E\big\langle { E_I(\sigma^1) }\big\rangle  = s_N \Edis{ \frac{\big\langle{\theta_{I,1} e^{-\lambda_I  \theta_{I,1}}}\big\rangle}{\big\langle{e^{-\lambda_I \theta_{I,1}}}\big\rangle}}\,.
    \end{equation}
    Conditioning on the value of $\pi_I$ (which appears in the thermal mean $\langle\,\cdot\, \rangle$ inside the Poisson perturbation) and taking the mean over $\pi_I$ explicitly we get that
    \begin{equation}\label{eq:pois_trick}
        \begin{split}
            \E\big\langle f_n  E_I(\sigma^1) \big\rangle & = \sum_{r = 0}^{+\infty} \frac{s_N^r}{r!}e^{-s_N} \E\big\langle f_n  \sum_{i = 1}^r \theta_{I,i} \,\big| \, \pi_I = r \big\rangle \\
            & = \sum_{r = 1}^{+\infty} \frac{s_N^r}{r!}e^{-s_N} \E\big\langle f_n  \sum_{i = 1}^r \theta_{I,i}\,\big| \, \pi_I = r \big\rangle \\
            & = \sum_{r = 1}^{+\infty} \frac{s_N^r}{r!}e^{-s_N} \,r\, \E\big\langle f_n \theta_{I,1} \,\big| \, \pi_I = r \big\rangle \\
            & = s_N \sum_{r = 1}^{+\infty} \frac{s_N^{r-1}}{(r-1)!}e^{-s_N} \E\big\langle f_n \theta_{I,1} \,\big|\, \pi_I = r \big\rangle \\
            & = s_N \sum_{m = 0}^{+\infty} \frac{s_N^{m}}{m!}e^{-s_N} \E\big\langle f_n \theta_{I,1} \,\big|\, \pi_I = m+1 \big\rangle\,.
        \end{split}
    \end{equation}
        Let us define an independent random variable $\tilde \pi_I \sim \mbox{Poiss}(s_N)$, and for $l\in[n]$ new variables
    \begin{equation}
        \tilde E_I(\sigma^l) := \sum_{j=1}^{\tilde \pi_I} \theta_{I,j+1}^l\,,
    \end{equation}
    as well as a new Hamiltonian $\hamil''$ according to
    \begin{equation}
        \hamil''(\sigma^l\mid J,\pi,\tilde\pi,U,\lambda) := \hamil(\sigma^l\mid J) + \hamil^{\rm gauss}(\sigma^l) - \sum_{I'\in\dyadic \backslash \{I\}} \lambda_{I'} E_{I'}(\sigma^l) - \lambda_I \tilde E_I(\sigma^l)\,.
    \end{equation}
    Writing explicitly the mean w.r.t. the perturbed Gibbs measure we get that
    \begin{equation*}
        \begin{split}
            \E\big\langle f_n  E_I(\sigma^1) \big\rangle & = \Edis{ \frac{ \int_{\Sigma_N^n} f_n  E_I(\sigma^1) e^{\sum_{l=1}^n \hamil'(\sigma^l\mid J,\pi,U,\lambda)} d\sigma^1 \ldots d\sigma^n } {(\int_{\Sigma_N} e^{\hamil'(\sigma\mid J,\pi,U,\lambda)} d\sigma)^{n}} } \\
            & = s_N \Edis{  \frac{ \int_{\Sigma_N^n} f_n  \theta_{I,1} e^{-\lambda_I \sum_{l=1}^n \theta_{I,1}^l} e^{\sum_{l=1}^n \hamil''(\sigma^l\mid J,\pi,\tilde\pi,U,\lambda)} d\sigma^1 \ldots d\sigma^n }{(\int_{\Sigma_N} e^{-\lambda_I\theta_{I,1}} e^{\hamil''(\sigma \mid J,\pi,\tilde\pi,U,\lambda)} d\sigma)^{n}}  } \\
            & = s_N \Edis{\frac{\big\langle f_n  \theta_{I,1} e^{-\lambda_I \sum_{l=1}^n \theta_{I,1}^l}\big\rangle}{\big\langle{e^{- \lambda_I \theta_{I,1}}\big\rangle^n}  }}\,,
        \end{split}
    \end{equation*}
    where in the second equality we used equation \eqref{eq:pois_trick}, and in the third one we multiplied and divided by $(\partf'')^n:=(\int d\sigma \exp \hamil''(\sigma \mid J,\pi,\tilde\pi,U,\lambda) )^n$ and then finally used that $\hamil'(\sigma^l\mid J,\pi,U,\lambda)$ equals $\hamil''(\sigma^l\mid J,\pi,\tilde\pi,U,\lambda)$ in distribution. This proves the first identity in \eqref{eq:equiv_FdS1}. The second relation is obtained from this one by taking $f_n = 1$.
\end{proof}

As special case we derive a decoupling lemma that will allow for the proof of our main theorem.

\begin{lemma}[Decoupling lemma]\label{lem:decouplig_lemma}
    Recall $\theta_{I,1} := P_I(\sigma_{U^I_1})$ and define $A_N$ as the right-hand side of the bound in Theorem~\ref{thm:FdS_ineq}. For all $I\in\dyadic$ and $(r_N)_{N\geq1}$ s.t. $r_N \to +\infty$,
    \begin{equation*}
        \Elam{ \left| \Edis{ \frac{\Etherm{\theta_{I,1} e^{-\lambda_I\theta_{I,1}} \theta_{I,2} e^{-\lambda_I\theta_{I,2}}}}{\Etherm{e^{-\lambda_I\theta_{I,1}}e^{-\lambda_I\theta_{I,2}}}} } - \bigg( \Edis{ \frac{\Etherm{\theta_{I,1} e^{-\lambda_I\theta_{I,1}}} }{\Etherm{e^{-\lambda_I\theta_{I,1}}}} } \bigg)^2  \right|} \leq 2e^4(1-e^{-2})^{r_N+1} + 2^{r_N+1} A_N\,.
    \end{equation*}
\end{lemma}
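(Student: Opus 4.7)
The plan is to reduce
\[
Q \;:=\; \Edis{\frac{\Etherm{\theta_{I,1}\,e^{-\lambda_I\theta_{I,1}}\,\theta_{I,2}\,e^{-\lambda_I\theta_{I,2}}}}{\Etherm{e^{-\lambda_I\theta_{I,1}}\,e^{-\lambda_I\theta_{I,2}}}}}
\]
to $A^2$, where $A := \Edis{\Etherm{\theta_{I,1}e^{-\lambda_I\theta_{I,1}}}/\Etherm{e^{-\lambda_I\theta_{I,1}}}}$, by a truncated geometric-type expansion of the denominator of $Q$ combined with iterated application of Theorem~\ref{thm:FdS_ineq}. Since $\lambda_I\in[1/2,1]$ and $\theta_{I,j}\in[0,1]$ (as each $P_I$ takes values in $[0,1]$), the denominator $\Etherm{e^{-\lambda_I(\theta_{I,1}+\theta_{I,2})}}$ lies in $[e^{-2},1]$. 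The elementary identity $1/x=\sum_{k=0}^{r_N}(1-x)^k+(1-x)^{r_N+1}/x$ therefore leaves a remainder of modulus at most $e^2(1-e^{-2})^{r_N+1}$; since the numerator of $Q$ is uniformly bounded by $1$ in modulus, this truncates $Q$ to a sum of $r_N+1$ explicit terms plus an error of that order.

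I then rewrite each power $(1-\Etherm{X})^k$ as $\Etherm{\prod_{l=2}^{k+1}(1-X^{(l)})}$ using $k$ auxiliary independent replicas, and expand the product binomially, producing for each $k$ a signed sum of $2^k$ sub-terms of the form
\[
T_L \;:=\; \Edis{\Etherm{\theta_{I,1}^1\,\theta_{I,2}^1\;e^{-\lambda_I\sum_{l\in L}\theta_{I,1}^l}\;e^{-\lambda_I\sum_{l\in L}\theta_{I,2}^l}}}
\]
with $L=\{1\}\cup S$, $S\subseteq\{2,\dots,k+1\}$, and $n:=|L|$. To each $T_L$ I apply Theorem~\ref{thm:FdS_ineq} with this $n$ and the test function $f_n:=\theta_{I,2}^1\,e^{-\lambda_I\sum_{l\in L}\theta_{I,2}^l}$ (bounded by $1$), which extracts one factor of $A$ modulo a surplus denominator $\Etherm{e^{-\lambda_I\theta_{I,1}}}^n$ and an $\Elam{|\cdot|}$-error of at most $A_N$. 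A parallel truncated geometric expansion of $A^2$ (using the bound $\Etherm{e^{-\lambda_I\theta_{I,j}}}\ge e^{-1}$) then furnishes matching sub-terms whose denominators cancel those surplus factors and supply the second factor of $A$ term by term.

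Collecting contributions, the two truncation remainders combine to at most $2e^4(1-e^{-2})^{r_N+1}$---the extra power of $e^2$ reflecting the need to match the bound on the $Q$-truncation with an analogous bound for the $A^2$-expansion---while the iterated Franz--de Sanctis errors accumulate to at most $\sum_{k=0}^{r_N}2^k\cdot A_N\le 2^{r_N+1}A_N$, yielding the claimed inequality. The principal technical obstacle is the bookkeeping in the matching step: organising the two expansions so that the auxiliary denominators $\Etherm{e^{-\lambda_I\theta_{I,j}}}^n$ introduced at each Franz--de Sanctis application are exactly absorbed by the $A^2$-expansion, guaranteeing that every sub-term reduces cleanly to an $A^2$-contribution rather than leaving spurious residuals whose accumulated size would spoil the $2^{r_N+1}A_N$ aggregation.
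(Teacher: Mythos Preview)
Your overall architecture---truncated geometric expansion of the denominator, binomial expansion into replica sub-terms, then term-by-term use of Theorem~\ref{thm:FdS_ineq}---matches the paper's, but there is a genuine gap at the point where you apply the Franz--de Sanctis inequality to your sub-terms $T_L$. That inequality controls quantities of the specific shape
\[
\Edis{\,\frac{\Etherm{f_n\,\theta_{I,1}\,e^{-\lambda_I\sum_{l=1}^n\theta_{I,1}^l}}}{\Etherm{e^{-\lambda_I\theta_{I,1}}}^{\,n}}\,},
\]
i.e.\ with the denominator $W^n:=\Etherm{e^{-\lambda_I\theta_{I,1}}}^n$ sitting \emph{inside} the disorder expectation. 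Your $T_L=\Edis{\Etherm{\theta_{I,1}^1\theta_{I,2}^1\,e^{-\lambda_I\sum_{l\in L}(\theta_{I,1}^l+\theta_{I,2}^l)}}}$ has no such denominator; you call it ``surplus'' and hope a parallel expansion of $A^2$ will cancel it. But Theorem~\ref{thm:FdS_ineq} bounds $\Elam{\big|\Edis{[\,\cdot\,/W^n]}-A\,\Edis{\Etherm{f_n}}\big|}$, and since $W^n$ is a random variable inside $\Edis{}$, no separate expansion of the deterministic number $A^2$ can retroactively insert a $1/W^n$ there. The ``bookkeeping'' you flag is not bookkeeping: as written, Theorem~\ref{thm:FdS_ineq} simply does not apply to $T_L$.

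The paper fixes this with a small but essential rewrite before expanding: with $X,V,W$ the obvious thermal means, one writes $X/V=(X/W)\big/(V/W)$ and expands $1/(V/W)$ as $p_r(V/W)=\sum_{m=0}^r(1-V/W)^m$ (note $V/W\in[e^{-2},1)$). After binomial expansion the sub-terms are exactly $(X/W)(V/W)^{m'}=XV^{m'}/W^{m'+1}$, which match the Franz--de Sanctis form on the nose with $n=m'+1$ and $f_n=\theta_{I,2}\,e^{-\lambda_I\sum_{l=1}^{n}\theta_{I,2}^l}$; one application yields $\Elam|\,\Edis{[(X/W)(V/W)^{m'}]}-A\cdot\Edis{[YW^{m'}]}\,|\le A_N$. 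Summing over $m,m'$ gives the $2^{r+1}A_N$ term, and comparing $A\cdot\Edis{[Yp_r(W)]}$ with $A\cdot\Edis{[Y/W]}=A^2$ via the same polynomial approximation (now of $1/W$) yields the second $e^4(1-e^{-2})^{r+1}$---no denominator matching is needed at all. In short, your plan becomes correct once you expand $1/(V/W)$ rather than $1/V$.
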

\begin{proof}
    To ease a bit the notation, let us define the random variables
    \begin{equation*}
        X := \big\langle\theta_{I,1} e^{-\lambda_I\theta_{I,1}} \theta_{I,2} e^{-\lambda_I\theta_{I,2}}\big\rangle\,, \quad V := \big\langle{e^{-\lambda_I\theta_{I,1}} e^{-\lambda_I\theta_{I,2}}\big\rangle}\,, \quad Y := \big\langle{\theta_{I,1} e^{-\lambda_I\theta_{I,1}}}\big\rangle, \quad\mbox{and}\quad W := \big\langle{e^{-\lambda_I\theta_{I,1}}}\big\rangle\,.
    \end{equation*}
    With it, we can rewrite the statement of the lemma as
    \begin{equation*}
        \Elam{\big| \Edis{(X/V)} - \big(\Edis{(Y/W)}\big)^2 \big| } \leq 2e^4(1-e^{-2})^{r_N+1} + 2^{r_N+1} A_N\,.
    \end{equation*}
    
    Let us write
    \begin{equation*}
         \frac{X}{V} =  \frac{ X / W } { V / W },
    \end{equation*}
    where the division in the denominator belongs a.s. to the interval $[e^{-2},1)$ (recall $0< P_I(x) \leq 1$ for all $x\in\interv$). Let $(p_r(\cdot))_{r\geq1}$ be the sequence of polynomials on $[e^{-2},1)$ defined by $$p_r(x) := \sum_{m=0}^r (1-x)^m = \sum_{m=0}^r \sum_{m'=0}^m\binom{m}{m'} (-x)^{m'}\,.$$ By the geometric series formula we have that this sequence converges (uniformly in $[e^{-2},1)$) to the function $1/x$. Furthermore, it is easy to check that $\sup_{x\in[e^{-2},1)} |p_r(x) - 1/x| \leq e^2(1-e^{-2})^{r+1}$ . We then have that

    \begin{equation*}
        \begin{split}
            \Elam{\big| \Edis{(X/V)} - \big(\Edis{(Y/W)}\big)^2 \big| } & \leq  \Elam{\big| \Edis{(X/V)} - \Edis{[(X/W) p_r(V/W)]} \big| } \\
            & \qquad + \Elam{\big|  \Edis{(Y/W)} \Edis{[Y p_r(W)]} -\big(\Edis{(Y/W)}\big)^2\big| } \\
            & \qquad\qquad + \Elam{\big| \Edis{[(X/W) p_r(V/W)]} - \Edis{(Y/W)} \Edis{[Y p_r(W)]}\big| }\,.
        \end{split}
    \end{equation*}
    Each of the first two terms on the right-hand side is smaller or equal to $e^4(1-e^{-2})^{r+1}$, because of the uniform convergence of $p_r(\cdot)$ and the fact that all the random variables involved and their divisions are explicitly bounded. 

    In order to bound the last term we will use Theorem~\ref{thm:FdS_ineq}. Instead of doing it directly, we use
    \begin{equation*}
        \begin{split}
            &\Elam{\big| \Edis{[(X/W) p_r(V/W)]} - \Edis{(Y/W)} \Edis{[Y p_r(W)]}\big| } \\
            & \ \ \ \ \ \ \ \ \ \ \ \ \ \ \ \ \ \ \ \ \ \ \ \ \ \ \ \ \ \ \ \ \ \ \  \leq \sum_{m=0}^r \sum_{m'=0}^m \binom{m}{m'} \Elam{\big| \Edis{[(X/W) (V/W)^{m'}]} - \Edis{(Y/W)} \Edis{[Y W^{m'}]}\big| }\,,
        \end{split}
    \end{equation*}
    where each of the terms on the right-hand side can be bounded using Theorem \ref{thm:FdS_ineq}. For this, note that $(X/W)(V/W)^{m'}$ can be rewritten using replicas as
    \begin{equation*}
        \frac{\big\langle{\theta_{I,1}  e^{-\lambda_I\sum_{l=1}^{m'+1} \theta_{I,1}^l} \theta_{I,2}   e^{-\lambda_I\sum_{l=1}^{m'+1} \theta_{I,2}^l}}\big\rangle}{\big\langle{e^{-\lambda_I \theta_{I,1}}}\big\rangle^{m'+1}} \,.
    \end{equation*}
    Using this and Theorem \ref{thm:FdS_ineq} choosing as bounded function $f_{m'+1} := \theta_{I,2} \exp(-\lambda_I \sum_{l=1}^{m'+1} \theta_{I,2}^l)$ yields
    \begin{equation*}
       \sum_{m=0}^r \sum_{m'=0}^m \binom{m}{m'} \Elam{\left| \Edis{[(X/W) (V/W)^{m'}]} - \Edis{(Y/W)} \Edis{[Y W^{m'}]}\right| } \leq 2^{r+1} A_N\,,
    \end{equation*}
    where we used that $\sum_{m=0}^r \sum_{m'=0}^m \binom{m}{m'} = \sum_{m=0}^r 2^{m}=2^{r+1}-1$. 

	Combining the bounds for the three terms and setting $r = r_N$ yields the result.
\end{proof}

\begin{proof}[Proof of Proposition \ref{prop:mo_conc}] 
    Recall that $A_N$ is the right-hand side of the bound in Theorem~\ref{thm:FdS_ineq}. It is easy to see that if $A_N = o_N(1)$, then one can always find a proper sequence $r_N$ s.t. the right-hand side of Lemma \ref{lem:decouplig_lemma} is also $o_N(1)$. Then, by hypothesis we have that the values of $(s_N)_{N\geq1}$ can be chosen appropriately so that the right-hand side of the inequality in Lemma \ref{lem:decouplig_lemma} is an $o_N(1)$ function of $N$. We then have as a direct consequence that as $N\to+\infty$ the following convergence in distribution over $\lambda$ holds for all $I\in\mathcal{I}$:
    \begin{equation}\label{eq:conv_variance}
        \Edis{ \frac{\Etherm{\theta_{I,1} e^{-\lambda_I\theta_{I,1}} \theta_{I,2} e^{-\lambda_I\theta_{I,2}}}}{\Etherm{e^{-\lambda_I\theta_{I,1}}e^{-\lambda_I\theta_{I,2}}}} } - \bigg( \Edis{ \frac{\Etherm{\theta_{I,1} e^{-\lambda_I\theta_{I,1}}} }{\Etherm{e^{-\lambda_I\theta_{I,1}}}} } \bigg)^2 \convdist 0\,.
    \end{equation}

    We will prove the theorem by contradiction. Assume there exists some subsequence $(N_j)_{j\geq1}$, $\delta > 0$, $n\geq1$, and $k \in K_n$, such that along the subsequence $(N_j)_{j\ge 1}$ it holds that
    \begin{equation}\label{eq:non_conc_R}
        \Elam{\EThdis{\big( R^{(k)} - \Ethdis{R^{(k)}} \big)^2 }} \geq \delta\,.
    \end{equation}
    Because the spin variables are tight, for any $\lambda\in [-1/2,1]^{\mathbb{N}}$ there exists a further subsequence of $(N_j)_{j\geq1}$ along which they converge to some asymptotic random variables. But, as discussed in Section \ref{sec:technical}, these limiting spin random variables are separately exchangeable. Then, there exists an Aldous-Hoover representation as in Theorem \ref{thm:AHrep}. Keep in mind that this representation $\sigma=\sigma_\lambda$ depends on the perturbation parameters $\lambda$ that will play a special role compared to the remaining randomness; yet we will keep this $\lambda$-dependence implicit in the rest of the argument. Furthermore, Lemmas \ref{lem:mo_limit1} and \ref{lem:mo_limit2} and Corollary \ref{cor:thm_pure_state} also hold for this limit. We can therefore direclty work with the simplified Aldous-Hoover representation $\sigma(u,v_i,x_{i,l})$ and associated generalised magnetisation $\bar \sigma^{(k)}(u,v_i)$ (that both depend implicitely on $\lambda$).
    
    In the rest of the proof we will freely interchange limits and expectations, which is valid because every random variable involved is bounded. Recall $\theta_{I,j}^l := P_I(\sigma_{U^I_j}^l)$. Define $$\bar \theta_{I}=\bar \theta_I(u,v_1,x_{1,1}) := P_I(\sigma(u,v_1,x_{1,1}))\,.$$ Here we fix the spin index to $1$ and so we exclude it from the notation, but of course the conclusions generalise to all other spins. Remember that when we omit the replica index, it is implicit we are referring to the first replica. Recall also that the thermal mean $\langle g((\sigma_i^l)_{i,l}) \rangle$ of a function of multiple replicas asymptotically converges, in the subsequential limit, to an average of this function $\int_0^1 \cdots\int_0^1 g((\sigma(u,v_i,x_{i,l}))_{i,l}) \prod_{i,l}dx_{i,l}$ over all the variables indexed by a replica, i.e., simply $(x_{i,l})$ when using the simplified representation. This can be seen from a slight generalisation of the proof of Lemma~\ref{lem:mo_limit2} in the Appendix (see also the Appendix in \cite{barbier2020strong}). By this convergence of the thermal mean and the fact that the ratio below can be expressed, using replicas, as a linear combinations of thermal means,
    \begin{align*}
        &\Edis{ \frac{\big\langle{\theta_{I,1} e^{-\lambda_I\theta_{I,1}} \theta_{I,2} e^{-\lambda_I\theta_{I,2}}}\big\rangle}{\big\langle{e^{-\lambda_I\theta_{I,1}}e^{-\lambda_I\theta_{I,2}}}\big\rangle} }  \\
        & \ \ \ \ \ \ \ \ \ \ \ \ \ \ \ \ \ \ \ \ \!=\! \sum_{m=0}^{k} \sum_{m'=0}^{m} \binom{m}{m'} (-1)^{m'} \Edis{ \big\langle{\theta_{I,1} e^{-\lambda_I\theta_{I,1}} \theta_{I,2} e^{-\lambda_I\theta_{I,2}} e^{-\lambda_I \sum_{l=2}^{m'+1} (\theta_{I,1}^l+\theta_{I,2}^l)}}\big\rangle} + o_k(1)
    \end{align*}
    we deduce using Corollary \ref{cor:thm_pure_state} that in this subsequential limit (setting $x=x_{1,1}$ and $v=v_1$) we almost surely have that
    \begin{equation*}
        \Edis{ \frac{\big\langle{\theta_{I,1} e^{-\lambda_I\theta_{I,1}} \theta_{I,2} e^{-\lambda_I\theta_{I,2}}}\big\rangle}{\big\langle{e^{-\lambda_I\theta_{I,1}}e^{-\lambda_I\theta_{I,2}}}\big\rangle} }\to \int_0^1 \bigg( \int_0^1 \frac{\int_0^1  \bar \theta_I e^{-\lambda_I \bar \theta_I}\, dx}{\int_0^1 e^{-\lambda_I \bar \theta_I} \, dx } dv \bigg)^2 du\,.
    \end{equation*}
    Similarly, we also have in this subsequential limit that
    \begin{equation*}
         \bigg( \Edis{ \frac{\Etherm{\theta_{I,1} e^{-\lambda_I\theta_{I,1}}} }{\Etherm{e^{-\lambda_I\theta_{I,1}}}} } \bigg)^2 \to \bigg( \int_0^1 \int_0^1 \frac{\int_0^1  \bar \theta_I e^{-\lambda_I \bar \theta_I}\, dx}{\int_0^1 e^{-\lambda_I \bar \theta_I} \, dx} dv \, du \bigg)^2\,.
    \end{equation*}
   	Then, if we define for each $I\in\mathcal{I}$ the random variable
    \begin{equation}\label{eq:rand_var_const}
        Y_{\lambda,I}(u) := \int_0^1 \frac{\int_0^1 \bar \theta_I e^{-\lambda_I \bar \theta_I} \, dx}{\int_0^1 e^{-\lambda_I \bar \theta_I} \, dx} \, dv\,,
    \end{equation}
    by the unicity of the limit, equation \eqref{eq:conv_variance} implies that the variance of $Y_{\lambda,I}(u)$ is $0$ and thus it is a.s. independent of $u$. Note that this conclusion is valid for every $I\in\mathcal{I}$. If (for $x\in\interv$) $P_I(x) = 2^{-\iota(I) - 2 m} \sum_{p=0}^{m-1} a_p (x+1)^p$, define new coefficients $\bar a_p := 2^{-\iota(I)- 2 m}\lambda_I a_p$. Then, because the sets $\mathcal{I}_m$ accumulate at $(0)$, taking (for each fixed $m\geq1$) the coefficients $(\bar a_p)_{p=0}^{m-1}$ as variables, the conclusion also holds for limits of these coefficients going to $0$ and derivatives w.r.t. them evaluated at $0$: by analyticity in $(\bar a_p)_{p=0}^{m-1}$ of the function $Y_{\lambda,I}(u) - \mathbb{E}Y_{\lambda,I}$ we have that $Y_{\lambda,I}(u)$ is a.s. constant for $\bar \theta_I$ defined by any polynomial $P_I$ with non-negative coefficients in a small enough neighbourhood of $0$. Because of this, when analysing some multioverlap $R^{(k)}=N^{-1} \sum_{i=1}^N (\sigma_i^1)^{k_1} \cdots (\sigma_i^n)^{k_n}$, we may consider a random variable of the form \eqref{eq:rand_var_const} but with a polynomial that only contains the powers $\{k_1,\dots,k_n\}$, which simplifies the notation.
    
    Using the fact that the derivatives cannot depend on $u$, we will contradict the hypothesis \eqref{eq:non_conc_R}. More specifically, we will prove by induction that, in this subsequential limit, every multioverlap has a self-averaging behaviour.
    
    Similarly to Section \ref{sec:settings}, we define, for every $n\geq1$, the sets $\tilde K_n := \{ k\in\N^n : k_1 \leq k_2 \leq \dots\leq k_n\}$ and $\tilde K := \bigcup_{n\ge 1} \tilde K_n$. As before, for every element $k:=(k_1,\dots,k_n)\in \tilde K$, we can associate to it the ($\lambda$-dependent) multioverlap asymptotic equivalent (recall Corollary~\ref{cor:thm_pure_state}):
    \begin{equation*}
        R^{(k)}_\infty=R^{(k)}_\infty(u) = \int_0^1 \prod_{l=1}^n \bar \sigma^{(k_l)}(u,v)\, dv\,.
    \end{equation*}
    
    We will give the set $\tilde K$ the lexicographic order $<_{\tilde K}$ defined for every pair of distinct elements $k:=(k_1,\dots,k_n)$, $k':=(k'_1,\dots,k'_{n'})\in \tilde K$ as per:
    \begin{enumerate}[(a)]
        \item when $n < n'$, then $k <_{\tilde K} k'$,
        \item in the case where $n = n'$, define $l_{\rm max} := \max\{l\in[n]: k_l \neq k'_l\}$. If $k_{l_{\rm max}} < k'_{l_{\rm max}}$, then $k <_{\tilde K} k'$.
    \end{enumerate}
    This is in fact a total order for the set $\tilde K$ and its smallest element is $(1)\in \tilde K$. Likewise, if we restrict it to the sets (for $n\geq1$) $\tilde K_n$, it is still a total order for them and the smallest element will be the $n$-dimensional vector of all ones $({1,\dots,1 })\in \tilde K_n$.
    
    We will prove by induction over this order that for each set $\tilde K_n$, every associated multioverlap is self-averaging. We will do this based on a second induction on the set index $n\geq1$. We mark explicitly the different steps of both inductions:
    
    \begin{enumerate}[(i)]
        \item First, we will see that the conclusion holds for $(1)\in \tilde K_1$. This is equivalent to saying that the magnetisation is self-averaging, i.e., it has an a.s. constant limit $R^{(1)}_\infty$. In this case we fix $\bar \theta_I = \bar a_1 (\sigma+1)$ (where $\sigma=\sigma(u,v,x)$ is the simplified Aldous-Hoover subsequential representation). We then have that for every $a_1 \in \mathcal{I}_1$, the expression (recall $\bar a_p := 2^{-\iota(I)-2m}\lambda_I a_p$)
        \begin{equation*}
            \int_0^1 \frac{\int_0^1 (\sigma+1) e^{-\bar a_1 (\sigma+1)} \, dx}{\int_0^1 e^{-\bar a_1 (\sigma+1)} \, dx} \, dv
        \end{equation*}
        is a.s. constant. By taking the limit $\bar a_1\to0$, we have that $\int_0^1\int_0^1 (\sigma+1) \, dx \, dv$ is also a.s. constant. And because by Lemma \ref{lem:mo_limit1} and Corollary \ref{cor:thm_pure_state} we have $R^{(1)}_\infty(u) = \int_0^1 \bar \sigma^{(1)}(u,v) \, dv$, therefore $R^{(1)}_\infty$ is also a.s. constant.
        \item We will now see that given $k\in \tilde K_1$, if for every $k'\in \tilde K_1$ s.t. $k' <_{\tilde K} k$ we have that $R^{(k')}_\infty$ is self-averaging, then $R^{(k)}_\infty$ is also. For this, set $\bar \theta_I = \bar a_1 (\sigma+1)^k$. We then have that
        \begin{equation*}
            \int_0^1 \frac{\int_0^1 (\sigma+1)^k e^{-\bar a_1 (\sigma+1)^k} \, dx}{\int_0^1 e^{-\bar a_1 (\sigma+1)^k} \, dx} \, dv
        \end{equation*}
        is a.s. constant. Taking limit $\bar a_1\to0$ we get that also is $\int_0^1\int_0^1 (\sigma+1)^k \, dx \, dv$. By Newton's binomial we have that
        \begin{equation*}
            \int_0^1\int_0^1 (\sigma+1)^k \, dx \, dv = \int_0^1 \bar \sigma^{(k)} \, dv + \sum_{i=0}^{k-1} \binom{k}{i} \int_0^1 \bar \sigma^{(i)} \, dv\,.
        \end{equation*}
        Owing to the fact that (for every $i\in[k-1]$) by Lemma \ref{lem:mo_limit1} $R^{(i)}_\infty = \int_0^1 \bar \sigma^{(i)} \, dv$, we have that by the induction hypothesis the sum on the second term is a.s. constant. Which implies that also is the first term, as we wanted to prove.
        \item In this step we will see that if for every $n'<n$ the conclusion holds for every element of $\tilde K_{n'}$, then it is also true for the smallest element of $\tilde K_n$ (that is, for $k_{\rm min} := (1,\dots,1)\in \tilde K_n$). We fix $\bar \theta_I = \bar a_1 (\sigma+1)$. We know that
        \begin{equation*}
            \frac{\partial^{n-1}}{\partial \bar a_1^{n-1}}  \int_0^1 \frac{\int_0^1 (\sigma+1) e^{-\bar a_1 (\sigma+1)} \, dx}{ \int_0^1 e^{-\bar a_1 (\sigma+1)} \, dx} \, dv\,\Bigg|_{\bar a_1=0}
        \end{equation*}
        is a.s. a constant. It is easy to see that the only term depending on $n$ replicas $(\sigma(u,v,x_{1,l}))_{l\le n}$ comes from deriving the denominator $n-1$ times and is proportional to
        \begin{equation*}
            \int_0^1 \frac{\int_0^1\cdots\int_0^1 \prod_{l=1}^n (\sigma(u,v,x_{1,l})+1)  e^{-\bar a_1 \sum_{l=1}^n (\sigma(u,v,x_{1,l})+1)} \,dx_{1,l}}{\big(\int_0^1 e^{-\bar a_1 (\sigma(u,v,x)+1)} \, dx\big)^n} \, dv\,.
        \end{equation*}
        Taking limit $\bar a_1\to0$, all the terms that depend on $n-1$ replicas or less can be rewritten as combinations of multioverlap limits of $n-1$ or less replicas, which are a.s. constant by induction hypothesis. Then, we conclude that $\int_0^1 (\bar \sigma^{(1)}+1)^n dv$ must be a.s. constant. Again, if we expand the power inside this last integral, we get that the only term depending on $n$ replicas is $\int_0^1 (\bar \sigma^{(1)})^n dv = R^{(k_{\rm min})}_\infty$. Then we have that this term is a.s. constant, which proves the conclusion of this step.
        \item Finally, we prove that for each $k=(k_1,\dots,k_n)\in \tilde K$, if for every $k'\in \tilde K$ such that $k' <_{\tilde K} k$ the associated multioverlap concentrates, then so is the one associated to $k$. Set $\bar \theta_I = \sum_{p=1}^{n} \bar a_p (\sigma+1)^{k_p}$. By the fact that $Y_{\lambda,I}$ is a.s. constant we have that
        \begin{equation*}
            \frac{\partial^n}{\partial \bar a_{1}\cdots\partial \bar a_{n}} \int_0^1 \frac{\int_0^1 \bar \theta_I e^{- \bar \theta_I} \, dx}{\int_0^1 e^{- \bar \theta_I} \, dx}\, dv\,\Bigg|_{\bar a_{1},\dots,\bar a_{n}=0}
        \end{equation*}
        is also a.s. constant. Again, all the terms that depend on $n-1$ replicas or less can be rewritten as combinations of multioverlap limits of $n-1$ or less replicas, which are a.s. constant by induction hypothesis. And there will only be $n$ terms that depend on $n$ replicas, all of them with the same sign and proportional to
        \begin{equation*}
            \int_0^1\cdots\int_0^1 \Big(\prod_{l=1}^n (\sigma(u,v,x_{1,l})+1)^{k_l}  \,dx_{1,l}\Big)\, dv\,.
        \end{equation*}
        These terms are all obtained by deriving one time the factor $\bar \theta_I$ on the numerator and $n-1$ times the denominator (observe that the terms that do not involve a derivative of the factor $\bar \theta_I$ in the numerator become $0$ when evaluated at $\bar a_1=\dots=\bar a_n=0$). This last integral is therefore a.s. constant. By expanding and distributing the products in this integral, we find that it can be rewritten as
        \begin{equation*}
            \int_0^1\cdots\int_0^1 \Big(\prod_{l=1}^n (\sigma(u,v,x_{1,l})+1)^{k_l}  \, dx_{1,l}\Big)  dv = R^{(k)}_\infty + \sum_{k'<_{\tilde K} k} A_{k'} R^{(k')}_\infty + 1\,,
        \end{equation*}
        where the coefficients $A_{k'}$ are all positive integers. By induction hypothesis, the contribution $\sum_{k'<_{\tilde K} k} A_{k'} R^{(k')}_\infty $ is a.s. constant. This then proves that $R^{(k)}_\infty$ is a.s. independent of $u$ (but still depends on $\lambda$).
    \end{enumerate}
    
    By this induction, we conclude that for every $k\in K$, $(R^{(k)} -\mathbb{E}\langle R^{(k)}\rangle ) \convdist 0$ in the subsequential limit. Because the multioverlaps are a.s. bounded, this implies that for every $k\in K$, $$\big(R^{(k)} - \mathbb{E}\big\langle R^{(k)}\big\rangle\big)\xrightarrow{L^2} 0\,.$$ This contradicts the assumption \eqref{eq:non_conc_R}, which proves that there exists no such subsequence $(N_j)_{j\ge 1}$ of system sizes along which \eqref{eq:non_conc_R} holds. This finishes the proof.
\end{proof}

We have finished the proof of the main intermediate result. We now show how to obtain the results for the original (non perturbed) model from the ones we just proved for the perturbed one.


\subsection{Proof of Proposition \ref{prop:thm_mean_dif}}
    We consider the perturbed model \eqref{pert_mode} with perturbation strength parameter $t$. Recall that we denote by $\langle \,\cdot\, \rangle_t$ the expectation w.r.t. the Gibbs measure associated with this perturbed model. Recall also $E_I(\sigma^l) := \sum_{i=1}^{N} \pi_{I,i} P_I(\sigma^l_{i})$. For every $t\in [0, 1]$,
    \begin{equation*}
        \Big| \frac{d}{d t} \Etherm{R^{(k)}}_t \Big| =  \Big| \sum_{l=1}^n \Big\langle \big(R^{(k)}-\Etherm{R^{(k)}}_t\big) \sum_{I\in\mathcal{I}} \lambda_I \big( E_I(\sigma^l) - \big\langle {E_I}\big\rangle_t\big)\Big\rangle_t \Big|\,.
    \end{equation*}
    By Cauchy-Schwarz we get that
    \begin{equation*}
        \Big( \frac{d}{d t} \Etherm{R^{(k)}}_{t} \Big)^2 \leq n^2 \ETherm{\big(R^{(k)}-\Etherm{R^{(k)}}_t\big)^2}_t \Big\langle\big[\sum_{I\in\mathcal{I}} \lambda_I \big( E_I(\sigma) - \big\langle {E_I}\big\rangle_t\big)\big]^2\Big\rangle_t\,.
    \end{equation*}
    We will now bound both thermal variances on the r.h.s. by means of Brascamp-Lieb's inequality.
    
    For the multioverlap fluctuations, this is given by Proposition \ref{prop:mo_thm_conc}, that applies as the perturbed Hamiltonian is concave for any $t\in[0,1]$, and which states that
    \begin{equation*}
        \ETherm{\big(R^{(k)}-\Etherm{R^{(k)}}_t\big)^2}_t \leq \frac{\norm{k}^2}{N \varepsilon_N}\,,
    \end{equation*}
    where $\norm{k}^2 := \sum_{i=1}^{|k|} k_i^2$. For the thermal variance of $\sum_{I\in\mathcal{I}} \lambda_I E_I$ we first compute the square of the norm of its gradient as a function of $\sigma$. It is easy to see that for every $i \in [N]$,
    \begin{equation*}
        \frac{\partial}{\partial \sigma_i} \sum_{I\in\mathcal{I}} \lambda_I E_I = \sum_{I\in\mathcal{I}} \lambda_I \pi_{I,i} P'_I(\sigma_i) =  \sum_{I\in\mathcal{I}} \lambda_I \pi_{I,i} 2^{-\iota(I)- 2 m} \sum_{p=1}^{m-1} p a_p (\sigma_i+1)^{p-1} \leq \sum_{I\in\mathcal{I}} C_I \pi_{I,i} \, ,
    \end{equation*}
    where $C_I := 2^{-\iota(I)-2m-1} (m-1)^2$ (recall the Poisson perturbation is summable, so exchanging derivatives and the summation can be done safely). Now, if we define $X_i := \sum_{I\in\mathcal{I}} C_I \pi_{I,i}$, we have that $\norm{\nabla E_I}^2 \leq \sum_{i=1}^N X_i^2$. By Corollary \ref{cor:brascampstrong} we get that
    \begin{equation*}
       \Big\langle\big[\sum_{I\in\mathcal{I}} \lambda_I \big( E_I(\sigma) - \big\langle {E_I}\big\rangle_t\big)\big]^2\Big\rangle_t \leq \frac{1}{\varepsilon_N}\sum_{i=1}^N X_i^2\,.
    \end{equation*}

    All this put together shows that a.s.
    \begin{equation*}
        \Big| \frac{d}{d t} \Etherm{R^{(k)}}_{t} \Big| \leq n \norm{k} \sqrt{\frac{\sum_{i=1}^N X_i^2}{N \varepsilon^2_N}}\,.
    \end{equation*}
    We now set $\varepsilon_N$ to be equal to $(s_N/N)^{1/3}$, which we can do because the only constraints on this sequence are that it should be $o(1)$ and $N\varepsilon_N\to+\infty$. Then, we get that
    \begin{equation*}
        \E \Big| \Etherm{R^{(k)}}_{t=1} - \Etherm{R^{(k)}}_0 \Big| \le  \E \sup_{t\in[0,1]} \Big| \frac{d}{dt} \Etherm{R^{(k)}}_{t}\Big| \leq C(k) \Big(\frac{s_N}{N}\Big)^{1/6}(1+o_N(1)) = o_N(1)\,.
    \end{equation*}
    For the inequality we used the concave version of Jensen's inequality, that $\pi_{I,i}$ is Poisson with mean $s_N/N\to 0_+$, and $C(k) := n \norm{k} \sqrt{\sum_{I\in\mathcal{I}} C_I^2} < +\infty$. \qed

\subsection{Proof of strong replica symmetry for the original model [Theorem \ref{thm:mo_original_conc}]}\label{sec:proof_orig_conc}
We now extend the concentration result of Proposition \ref{prop:mo_conc} to the system without the Poisson perturbation ($t=0$) by the fact that the values of the thermal and disorder means of multioverlaps, with and without the perturbation, differ by a vanishing quantity.

Define $R_0^{(k)}$ as a multioverlap fluctuating according to the non perturbed measure, so $\langle R_0^{(k)} \rangle:=\langle R^{(k)}\rangle_{t=0}$. Instead $R^{(k)}$ fluctuates according to the perturbed measure, so $\langle R^{(k)} \rangle:=\langle R^{(k)}\rangle_{t=1}$. From Proposition~\ref{prop:thm_mean_dif} we immediately obtain that under the conditions of Proposition \ref{prop:mo_conc}, for every $k\in K$, we almost surely have that
\begin{equation}
    \Big| \E\big\langle R^{(k)}\big\rangle - \E\big\langle R^{(k)}_0\big\rangle \Big| \leq C(k) \Big(\frac{s_N}{N}\Big)^{1/6}(1+o_N(1))\,.\label{to_use}
\end{equation}
This can be used to prove that the multioverlaps for the original model without the Poisson perturbation (i.e., for $t = 0$) also concentrate. As usual, for a r.v. $X$ we will write $\norm{X}_2 := \sqrt{\Ethdis{X^2}}$. 

By the triangular inequality we have that (at $t=0$ the system does not depend on $\lambda$)
    \begin{equation*}
        \begin{split}
            \big\|{\noverlap_0 - \E\big\langle R_0^{(k)} \big\rangle} \big\|_2 & \leq \big\|{\noverlap_0 - \big\langle R_0^{(k)} \big\rangle} \big\|_2 + \Elam \norm{\Etherm{\noverlap} - \Ethdis{\noverlap}}_2 \\
            & \ \ \ \ \ \ \ \ \ + \Elam \big\|{\Etherm{\noverlap} - \big\langle R_0^{(k)} \big\rangle}\big\|_2 + \Elam\big\|{\Ethdis{\noverlap} - \E\big\langle R_0^{(k)} \big\rangle}\big\|_2\,.
        \end{split}
    \end{equation*}
The first term goes to zero by Theorem~\ref{prop:mo_thm_conc}, the second one by concave Jensen's inequality and Proposition \ref{prop:mo_thm_conc} and Proposition \ref{prop:mo_conc}, and the last two by Proposition~\ref{prop:thm_mean_dif} and \eqref{to_use}. \qed


\subsection{Proof of Corollaries \ref{cor:asymp_indep_thm}, \ref{cor:asymp_spin_dist} and \ref{cor:asymp_indep_disthm}}\label{sec:proof_coro}

As a direct consequence of Theorem \ref{thm:mo_original_conc}, the proofs of the corollaries present in Section \ref{sec:results} follow.

\begin{proof}[Proof of Corollary \ref{cor:asymp_spin_dist}]
    The fact that the multioverlaps concentrate (by Theorem \ref{thm:mo_original_conc}) means that along every subsequence of system sizes s.t. the spin variables $(\sigma_i^l)_{i,l\geq1}$ converge in distribution, for all $k\in K$, the associated asymptotic multioverlaps $R^{(k)}_\infty(u,w_1,\dots,w_n)$ are a.s. constant. This implies that there exist fixed values $u_0\in[0,1]$ and $(w_{0,l})_{l\geq1}\in[0,1]^\N$ s.t. (for all $k=(k_1,\dots,k_n)\in K$) $R^{(k)}_\infty(u,w_1,\dots,w_n) = R^{(k)}_\infty(u_0,w_{0,1},\dots,w_{0,n})$ almost surely. 

    We now prove that we may take the Aldous-Hoover limit to be given (for the spin $\sigma_i^l$, with $i,l\geq1$) by a new function $\tilde \sigma(v_i,x_{i,l}) := \sigma(u_0,v_i,w_{0,l},x_{i,l})$. Indeed, by doing so, the values of the means of all the multioverlaps and their products remain unchanged. This implies that the joint distribution of the variables $(\tilde \sigma(v_i,x_{i,l}))_{i,l\ge 1}$ is the same as the one of $(\sigma(u,v_i,w_{l},x_{i,l}))_{i,l\ge 1}$. To see this, note that any generic joint moment of spins given by (for $\mathcal{C} \in \N^n \times \N^n$)
    \begin{equation*}
        m_\mathcal{C} := \E \Big\langle \prod_{(i,l)\in\mathcal{C}} \sigma_i^l  \Big\rangle\,,
    \end{equation*}
    can be straightforwardly expressed as the mean of a product of multioverlaps. By what we said previously these moments are asymptotically the same when computed using representation $\sigma$ or the simplified one $\tilde \sigma$: the joint moments generating functions of spin variables distributed according to both Aldous-Hoover limits are the same. This proves that the joint distributions of both limits are the same. Therefore, they are equivalent.
    
    The conclusion of the Corollary \ref{cor:asymp_spin_dist} is obtained by re-expressing the fact that the distributional limit of $(\sigma_i^l)_{i,l\geq1}$ along this convergent subsequence is $(\tilde \sigma(v_i,x_{i,l}))_{i,l\geq1}$ (with $(v_i)_{i\geq1}$ and $(x_{i,l})_{i,l\geq1}$ all i.i.d. uniform in $[0,1]$) in terms of random measures.
\end{proof}

\begin{proof}[Proof of Corollaries \ref{cor:asymp_indep_thm} and \ref{cor:asymp_indep_disthm}]
    The proof of Corollary \ref{cor:asymp_indep_disthm} requires the independence of the asymptotic Aldous-Hoover representation on both the variables $u$ and $(w_l)_{l\geq1}$. Corollary \ref{cor:asymp_indep_thm} is just a weaker version of this result derived in an analogous manner but for the case in which the Aldous-Hoover representation does not depend on $(w_l)_{l\geq1}$ but may still have a dependence on $u$. Its proof is thus omitted.

    Let $\{h_i\}_{i\in[k]}$ be as in the statement of the corollary and $(N_j)_{j\ge 1}$ be a subsequence of $N$. Because the spin variables are tight, we have that there exists some subsubsequence $(N_{j_m})_{m\ge 1}$ along which the spins $\sigma_1,\dots,\sigma_k$ (and their replicas) converge jointly in distribution, with asymptotic Aldous-Hoover representation $\sigma(u,v,w,x)$. By the independence of $\sigma(u,v,w,x)$ on $u$ and $w$ (consequence of Corollary \ref{cor:asymp_spin_dist}), we have that along this subsubsequence strong decoupling holds:
    \begin{equation}
        \E \Big\langle{\prod_{j=1}^k h_j(\sigma_{j})}\Big\rangle - \prod_{j=1}^k \E\Big\langle{h_j(\sigma_{j})}\Big\rangle \rightarrow 0\,.\label{meanh-hmean}
    \end{equation}
   We have then proved that every subsequence has a subsubsequence along which strong decoupling holds. Because the limit is always the same, we have the convergence along the entire sequence $N$. This means that as $N\to+\infty$ \eqref{meanh-hmean} holds.
\end{proof}

\section*{Acknowledgements}
J.B. would like to thank Nicolas Macris and Marc M\'ezard for discussions and their support. D.P. was partially supported by NSERC and Simons Fellowship. M.S. would like to thank Matthieu Jonckheere for all the academic and personal help throughout the last years.


\end{document}